      \title[Modules over categories and   Betti posets of monomial ideals]%
            {Modules over categories and\\ Betti posets of monomial ideals}
     \author{Alexandre Tchernev}
     \author{Marco Varisco}
    \address{Department of Mathematics and Statistics, 
             University at Albany, SUNY, USA\smallskip}
      \email{\href{mailto:atchernev@albany.edu}
                  {atchernev@albany.edu}}
    \urladdr{\url{http://www.albany.edu/~tchernev/}\smallskip}
      \email{\href{mailto:mvarisco@albany.edu}
                  {mvarisco@albany.edu}}
    \urladdr{\url{http://www.albany.edu/~mv312143/}\smallskip}
   \keywords{}
  \subjclass[2010]{Primary: 13D02, 05E40, 06A11.\\\indent To appear in \emph{Proceedings of the American Mathematical Society}\smallskip}
       \date{September 30, 2014}
\setlist{labelindent=\parindent, leftmargin=*}
\DeclareMathAlphabet{\matheurm}      {U}{eur}{m}{n}
    \SetMathAlphabet{\matheurm}{bold}{U}{eur}{b}{n}
\numberwithin{equation}{section}
\theoremstyle{plain}
  \newtheorem{corollary}  [equation]{Corollary}
  \newtheorem{lemma}      [equation]{Lemma}
  \newtheorem{proposition}[equation]{Proposition}
  \newtheorem{theorem}    [equation]{Theorem}
\theoremstyle{definition}
  \newtheorem{definition} [equation]{Definition}
  \newtheorem{example}    [equation]{Example}
  \newtheorem{remark}     [equation]{Remark}
\newcommand*{\define}[5]{%
  \ifstrequal{#2}{*}{\expandafter#1\expandafter*}
                    {\expandafter#1}%
  \csname#4#5\endcsname{#3{#5}}
}
\renewcommand*{\SS}{\mathscr{S}}
\newcommand*{\modu}[1]{#1\D\matheurm{mod}}
\newcommand*{\op}  {{\operatorname{op}}}
\newcommand*{\gr}  {{\operatorname{gr}}}
\newcommand*{\TO}[1][]{\stackrel{#1}{\to}}
\newcommand{\MOR}[4][]{#2\colon#3\TO[#1]#4}
\DeclarePairedDelimiterX\SET[2]{\{}{\}}
                               {\,#1\;\delimsize\vert\;#2\,}
\DeclareMathOperator*{\disju} {\ts\coprod}
\DeclareMathOperator*{\tensor}{\otimes}
\newcommand*{\lra}{\longrightarrow} 
\newcommand*{\lla}{\longleftarrow}
\newcommand*{\ts}{\textstyle}
\newcommand*{\os}{\overset}
\newcommand*{\ns}{\negthickspace}
\newcommand*{\ths}{\thinspace}
\newcommand*{\bfa}{\alpha}
\newcommand*{\bfb}{\beta}
\newcommand*{\bfg}{\gamma}
\newcommand*{\D}{\text{-}}
\newcommand*{\gf}{\Bbbk}
\newcommand*{\m}{\mathfrak{m}}
\newcommand*{\poset}[1]{\mathcal{#1}}
\newcommand*{\KP}[1]{{#1}^{\poset{P}}}
\newcommand*{\KB}[1]{{#1}^{\poset{B}}}
\newcommand*{\cx}[2][]{#2_{\bullet #1}}
\newcommand*{\cxB}[2][]{#2_{\bullet #1}^{\ths\poset{B}}}
\newcommand*{\cxP}[2][]{#2_{\bullet #1}^{\ths\poset{P}}}
\DeclareMathOperator{\HH}{H} 
\DeclareMathOperator{\RH}{\widetilde{H}}
\begin{document}

\begin{abstract} 
We introduce to the context of multigraded modules the methods 
of modules over categories from algebraic topology 
and homotopy theory. We develop the basic theory quite generally, 
with a view toward future applications to a wide class of graded 
modules over graded rings in~\cite{Tchernev-Varisco}. 
The main application in this paper is to study the Betti poset 
$\poset{B}=\poset{B}(I,\gf)$ of a monomial ideal~$I$ in the 
polynomial ring $R=\gf[x_1,\dots,x_m]$ over a field~$\gf$, which 
consists of all degrees in~$\IZ^m$ of the homogeneous basis 
elements of the free modules in the minimal free $\IZ^m$-graded 
resolution of~$I$ over~$R$. We show that the order simplicial 
complex of~$\poset{B}$ supports a free resolution of $I$ over $R$.
We give a formula for the Betti numbers of $I$ in terms of Betti 
numbers of open intervals of $\poset{B}$, and we show that 
the isomorphism class of $\poset{B}$ completely determines the 
structure of the minimal free resolution of $I$, thus  
generalizing with new proofs results of Gasharov, Peeva, and 
Welker~\cite{Gasharov-Peeva-Welker}. We also characterize the 
finite posets that are Betti posets of a monomial ideal. 
\end{abstract}

\maketitle

\section*{Introduction}

Monomial ideals have long been the focus of extensive research
as fundamental objects 
that provide a gateway for interaction between commutative 
algebra, combinatorics, symbolic computation, algebraic 
geometry, and algebraic topology. The more general case  
of multigraded modules has 
recently become of particular interest in applied algebraic 
topology, where they have emerged as a 
central object of study in the theory of multidimensional 
persistence \cite{Carlsson-Zomorodian}. 
In that theory, as well as in other open questions 
like the Stanley depth conjecture, see e.g. \cites{Apel, Apel-2}, 
one would like to extract 
useful numerical and homological invariants out of data on the 
behaviour of the homogeneous components of the multigraded module 
under study. For monomial ideals these kinds of problems are 
usually handled by exploiting the close connection with 
combinatorics of simplicial complexes and then using the sophisticated 
techniques of topological combinatorics that become available. 
While that approach has been shown to work very well for multigraded 
modules in generic situations \cite{Charalambous-Tchernev}, 
it is quite apparent 
that to effectively tackle these problems 
in general one needs new methods that rely less on combinatorics 
and more on homological algebra.   

The main goal of this paper is to develop such a method 
by  adapting to the setting of 
graded modules over a polynomial ring $R=\gf[x_1,\dots, x_m]$
the language and ideas of modules over categories, 
which are familiar notions 
in algebraic topology and homotopy theory.
When $\CJ$ is a small category a \emph{$\gf$-module over $\CJ$} or a 
\emph{$\gf\CJ$-module} is just a functor from $\CJ$ to the category 
$\modu{\gf}$ of all $\gf$-modules. 
The category $\modu{\gf\CJ}$ has objects the $\gf\CJ$-modules and 
morphisms the corresponding natural transformations. The reason  
such notions are relevant in the study of multigraded modules is 
because one can view  a $\IZ^m$-graded $R$-module $M$    
as a functor from the small category 
of the poset $\IZ^m$ (with coordinate-wise partial ordering) 
to the category $\modu{\gf}$, i.e., a $\gf\IZ^m$-module.  
In particular, the category 
$\mg\modu{R}$ of 
multigraded $R$-modules is equivalent to the category   
$\modu{\gf\IZ^m}$. 
Such categories of functors from small categories to 
categories of modules have been extensively studied in 
the context of algebraic topology, where 
they have proven to be invaluable tools 
in understanding equivariant phenomena. 
There one typically considers functors defined on the 
so-called orbit category of the acting group, see for 
example~\citelist{\cite{tomDieck}*{Section~I.11}, 
\cite{Lueck}*{Section~9}, \cite{Davis-Lueck}, \cite{LRV}}.
For us, the salient feature of this approach comes from 
the fact that $\modu{\gf\CJ}$ 
is an abelian category, 
and therefore we can do homological algebra there. 

In order to use this technique effectively to study particular 
properties of $M$ one needs to identify an appropriate $\CJ$. 
For the applications 
in this paper we need only $\CJ$ 
to be the category of a poset. However, keeping in mind future  
applications to a wide range of graded modules and graded duality 
\cite{Tchernev-Varisco}, 
we also develop the basic theory for modules over  
general small categories $\CJ$. A main contribution 
that is specific to the case 
when $\CJ$ is the category of a poset $\poset{P}$ 
is the introduction and study of a pair of adjoint functors: 
the exact \emph{$\poset P$-sampling} functor 
\[
\MOR{\KP{(-)}}{\mg\modu{R}}{\modu{\gf\poset P}}, 
\]
and its left adjoint the \emph{$\poset P$-homogenization} functor 
\[
\MOR{\SR_\gr\tensor_{\gf\poset P}(-)}
{\modu{\gf\poset P}}{\mg\modu{R}}
.
\]
These allow, among other things, to give functorial 
descriptions of common homogenization and relabeling techniques 
currently in the literature.   
We  emphasize that 
both functors have a simple explicit definition and are  
straightforward to compute. 

The main application is to    
study the structure of the \emph{Betti poset} $\poset{B}$ 
of a monomial ideal $I$ in $R$ (over a fixed field $\gf$), 
which consists of the $\IZ^m$-degrees of the 
basis elements 
of the free modules in the minimal free $\IZ^m$-graded 
resolution of $I$ over $R$. 
In our first main result, Theorem~\ref{T:main-1}, we show
that the order 
simiplicial complex $\Delta(\poset{B})$ of the Betti poset 
supports a free resolution of $I$ over $R$. 
Next, we show in 
Theorem~\ref{T:main-2} and Theorem~\ref{T:main-3} that 
the lcm-lattice in the results of Gasharov, Peeva, and 
Welker in~\cite{Gasharov-Peeva-Welker}
can be replaced by the 
Betti poset of the monomial ideal. In particular, 
the isomorphism class of the 
Betti poset completely determines the structure of the minimal free 
resolution of the monomial ideal. 
We include a 
simple example of two ideals that do not have isomorphic 
lcm-lattices, but have isomorphic Betti posets 
over every field $\gf$.  
Finally, we characterize in 
Theorem~\ref{T:when_is_betti}
the finite posets that are Betti posets of monomial ideals. 

It should be noted that while one can, as expected, deduce our 
Betti poset results also from the lcm-lattice results in 
\cite{Gasharov-Peeva-Welker} via judicious use of the   
homology version \cite{Bjorner-Wachs-Welker} 
of Quillen's Fiber Lemma and its consequences, the point of 
our paper is that 
the theory we develop and the proofs we give rely only on 
basic category theory and basic homological algebra. 
This allows us to use the same approach in 
\cite{Tchernev-Varisco} to study other cases of 
classical interest such as toric ideals, where there are 
currently no notions analogous to that of the lcm-lattice.

We would like to thank Amanda Beecher and Timothy Clark for 
useful conversations related to the material in 
Section~\ref{S:when-is-betti}. 


\section{Preliminaries}

Throughout this paper $\gf$ is a commutative,
associative, unital ring, modules are unitary, 
unadorned tensor products are over~$\gf$, and  
unless otherwise specified all functors are understood 
to be covariant.
If $W$ is any set, we write $\gf[W]$ for the free
$\gf$-module with basis the set~$W$. We denote by
$\modu{\gf}$ the category of $\gf$-modules. Given
an additive category~$\CA$ (like for
example~$\modu{\gf}$) we denote by~$\ch(\CA)$ the
additive category of chain complexes in~$\CA$ and
chain homomorphisms. For us chain complexes are
always understood to be indexed over the
integers~$\IZ$, and the differentials in a
complex~$X$ decrease degree. Given any poset~$\poset{P}$
we view it as a small category whose objects are
exactly the elements of~$\poset{P}$ and in which there
is exactly one morphism from $p$ to~$q$ if and
only if $p\leq q$, and none otherwise. When $p\le
q$ in $\poset P$ we abuse notation and denote the unique
morphism from $p$ to $q$ also by $p\le q$. 
Notice that
order-preserving functions between posets,
i.e., morphisms of posets,
correspond exactly to functors between the
associated categories. We always use
the same notation for a poset and its corresponding 
category. For each $a\in\poset{P}$ we write 
$\poset{P}_{\le a}$ for the filter
$\{x\in\poset{P}\mid x\le a\}$ in~$\poset{P}$. The filter
$\poset{P}_{<a}$ is defined analogously.
We denote by~$\IN$ the set
of all non-negative integers, and for every
$n\in\IN$ we write~$[n]$ for the totally ordered
set~$\{\,0<1<\dotsb<n\,\}$.  
We consider $\IN^m$ and $\IZ^m$ as
posets via the coordinatewise partial order:
$(a_1,\dots, a_m)\le (b_1,\dots, b_m)$ if and only
if $a_i\le b_i$ for all~$i$. In particular, both posets are  
lattices with joins given by taking componentwise 
maximums. 
A \emph{$\IZ^m$-graded poset} is a 
poset $\poset P$
together with a morphism of posets 
$\MOR{\gr}{\poset P}{\IZ^m}$ called the \emph{grading morphism}.  
When $\poset{P}$ is a subposet 
of~$\IZ^m$ then we always consider it as $\IZ^m$-graded with 
grading morphism the inclusion map.

Now let 
$R=\gf[x_1,\dotsc,x_m]$ be a polynomial ring over $\gf$ 
in the $m$ variables $x_1,\dotsc,x_m$. 
We consider the  
ring $R$ with the canonical $\mathbb Z^m$-grading,
called \emph{multigrading}.
For each 
$\bfa=(a_1,\dotsc,a_m)\in\mathbb N^m$ 
we write $x^{\bfa}$ for the monomial
$x_1^{a_1}\dotsm x_m^{a_m}$. 
Thus we have $R=\bigoplus_{\bfa\in\IZ^m}R_{\bfa}$, where 
\[
R_{\bfa}=
\begin{cases}
\gf x^{\bfa} &\text{ if } \bfa\in\IN^m; \\ 
0            &\text{ otherwise.} 
\end{cases}
\]
Let $M=\bigoplus_{\bfa\in\IZ^m}M_{\bfa}$ be a multigraded 
$R$-module.
We denote by~$\mg\modu{R}$ the category of multigraded 
$R$-modules (also called monomial graded $R$-modules) 
and homogeneous $R$-linear homomorphisms of degree~$0$.
We set 
\[
\deg(M)=\{\bfa\in\IZ^m\mid M_{\bfa}\ne 0\}. 
\]
If $\bfg\in\IZ^m$ we write  $M(\bfg)$ for 
the corresponding degree-shifted $R$-module, i.e., 
$M(\bfg)_{\bfa}=M_{\bfa+\bfg}$. In particular, $R(-\bfg)$ 
stands for the free multigraded $R$-module of rank one  
generated by a single free generator 
$e=1_R\in R(-\bfg)_{\bfg}$ 
of (multi)degree~$\bfg$.  A free multigraded $R$-module is  
then just a direct sum 
$\bigoplus_{\bfa\in\IZ^m}R(-\bfa)^{b_{\bfa}}$.    
Let
\[
\cx{F}=\qquad
0 \lla F_0 \lla F_1 \lla \dotsb \lla F_d \lla \dotsb 
\] 
be a free multigraded chain complex  
over $R$, i.e.,
the free modules $F_k$ are  
free multigraded and the differentials of $\cx{F}$ 
are morphisms of multigraded 
modules. For each $k$ let $B_k$ be a homogeneous 
basis of $F_k$, and for $\bfa\in\mathbb Z^m$ 
write $B_{k,\bfa}$ for the set of basis elements in $B_k$ 
of multidegree $\bfa$. Write $\cx[\bfa]{F}$ 
for the multigraded strand 
\[
\cx[\bfa]{F} =\qquad
0 \lla (F_0)_{\bfa} \lla (F_1)_{\bfa} \lla \dotsb \lla 
                        (F_d)_{\bfa} \lla \dotsb
\]
of $\cx{F}$ in degree $\bfa$. It is straightforward 
to notice that each $\gf$-module $(F_d)_{\bfa}$ is 
free over $\gf$ with basis the 
set $\coprod_{\bfg\le\bfa}\{x^{\bfa-\bfg}b\mid b\in B_{d,\bfg}\}$. 
Since the differentials in $\cx{F}$ 
preserve multidegrees 
the chain complex  $\cx{F}$ 
decomposes into a direct sum of strands 
$
\cx{F}=
\bigoplus_{\bfa\in\mathbb Z^m}\cx[\bfa]{F},  
$
and is a free resolution of $M$ if and only if each strand 
$\cx[\bfa]{F}$ is a free resolution of $M_\bfa$ over $\gf$. 
Finally, 
for any $\bfa\in\IZ^m$ and $\bfb\in\IN^m$ we have a 
canonical injective 
morphism of chain complexes 
\[
x^{\bfb}\colon \cx[\bfa]{F}\lra \cx[\bfa+\bfb]{F}
\]
via multiplication by the monomial $x^{\bfb}$. 

A principal case of interest is when $\gf$ is a field, 
and 
$\cx{F}$ is a minimal free resolution 
of $M$ over $R$. In that case  
the integer $\beta_{d,\bfa}=\beta_{d,\bfa}(M)=|B_{d,\bfa}|$ 
is called the $d$th 
\emph{Betti number of $M$ in multidegree $\bfa$} and 
\[
\beta_{d,\bfa}=
\dim_\gf\bigl(\cx[\bfa]{F}/
             \cx[\bfa]{F}\cap\m\cx{F}\bigr)_d =
\dim_\gf\Tor^R_d(M, R/\m)_{\bfa}  
\]
where as usual 
$\Fm=(x_1,\dots,x_m)$ is the maximal 
ideal generated by the variables in the polynomial 
ring $R$. 
A main motivation for developing the methods in this 
paper and in \cite{Tchernev-Varisco} was the desire to 
understand how the properties of the 
following poset relate to the properties of $M$.

\begin{definition}
Let $\gf$ be a field and let $M$ be a multigraded  
$R$-module. The set 
\[
\poset{B}(M)=
\{\bfa\in\IZ^m \mid \beta_{d,\bfa}(M)\ne 0 
                    \text{ for some } d\}  
\]
is called the set of \emph{Betti degrees} of $M$.
We consider it as a poset,
and call it then the \emph{Betti poset} of $M$,  
with respect to the partial ordering 
induced by the 
partial ordering on~$\IZ^m$. 
\end{definition}

The applications we consider here  are 
when $M$ is a monomial ideal~$I$ in~$R$.
In this case $\poset{B}(I)$ as defined above is the same as the Betti poset of~$I$ defined in~\cite{Clark-Mapes1} minus its minimal element.
When we want to emphasize the role of the field $\gf$ we write $\poset{B}(I,\gf)$ for $\poset{B}(I)$.
 
An important combinatorial object associated with $I$ is 
the \emph{lcm-lattice} $\poset L=\poset L(I)$, which is the 
subposet of $\IZ^m$ join-generated in $\IN^m$ by the multidegrees 
of the minimal generators of $I$. 
It is well known that the Betti poset $\poset B(I,\gf)$ 
is a subposet of  $\poset L(I)\setminus\hat 0$ (we 
write $\hat 0$ for the minimal element of a lattice). 
As a consequence of our definition the Betti 
poset of $I$ does not in general contain a smallest element. Its 
minimal elements are exactly the multidegrees of the 
minimal generators of $I$.  

Since the results in \cite{Clark1} about the first author's 
poset construction,  
the significance of posets other than the lcm-lattice 
for the study of free resolutions of monomial ideals has 
become more apparent, see \cites{Clark2, Clark-Tchernev}.  
Representing a natural next step in this line of research,  
the Betti poset of a monomial ideal was introduced  
in \cite{Clark-Mapes1}. The Betti poset results we present 
here were quickly followed by those 
in \cite{Clark-Mapes2}, and more recently the Betti poset 
was investigated in \cite{Wood}. 
We apply our newly developed tools to show
that, for the purposes of describing the minimal free 
resolution of $I$ over a fixed field $\gf$, the 
corresponding Betti poset encodes all the required 
information.


\section{Modules over a category}
\label{S:modules_over_category} 

In this section we introduce the fundamental concept of 
modules over a category, the sampling process, and we 
describe several examples.
We denote by~$\CJ$ a small category, e.g., the category 
associated with a poset.

\begin{definition}
\label{def:kJ-mod}
A \emph{$\gf\CJ$-module} is a functor 
$\MOR{M}{\CJ}{\modu{\gf}}$.
A \emph{homomorphism} of $\gf\CJ$-modules is a natural 
transformation.
So the category of $\gf\CJ$-modules, denoted 
$\modu{\gf\CJ}$, is just 
the category of functors from $\CJ$ to~$\modu{\gf}$; 
in symbols, $\modu{\gf\CJ}=\fun(\CJ,\modu{\gf})$.
\end{definition}

\begin{example}\label{ex:trivial-examples}
Here are some trivial examples.
If $\CJ=[0]$ is the category with exactly one object and one 
(identity) morphism, then obviously 
$\modu{\gf[0]}=\modu{\gf}$.
If $G$ is a group and $\underline{G}$ is the category with 
only one object and one (invertible) morphism for every 
element of~$G$, with composition defined by multiplication 
in~$G$, then $\modu{\gf\underline{G}}$ is the category 
of left modules over the group ring $\gf[G]$  
and $\modu{\gf\underline{G}^\op}$ is the category of 
right~$\gf[G]$-modules.
This explains the notation and terminology.
\end{example}

\begin{definition}
Let $\poset P$ be a $\IZ^m$-graded poset with grading 
$\gr\colon\poset{P}\lra \IZ^m$.  
Let $M$ be a multigraded $R$-module. 

(a) The \emph{$\poset P$-sample of $M$} is  the 
$\gf\poset P$-module $\KP{M}$ given by 
\[
\KP{M}(a)=M_{\gr(a)} \quad\text{ and }\quad 
\KP{M}(a\le b)=x^{\gr(b)-\gr(a)}\colon M_{\gr(a)}\lra M_{\gr(b)}. 
\]

(b) We refer to a functor of the form $\KP{(-)}$ 
as a \emph{sampling} (functor). Clearly a sampling  
is an exact functor
\[
\MOR{\KP{(-)}}{\mg\modu{R}}{\modu{\gf\poset P}}
\]
from the abelian category of multigraded 
$R$-modules to~$\modu{\gf\poset P}$. 
\end{definition}


\begin{remark} 
When $I$ is a monomial ideal in $R$  
and $\poset{P}$ a subposet of $\IZ^m$ such that 
$
\poset{P}\subseteq\deg(I),
$ 
then the $\gf\poset{P}$-module $\KP{I}$ 
is in fact isomorphic to the constant 
$\gf\poset{P}$-module 
with value $\gf$. Indeed, the isomorphism 
$\iota$ is given objectwise for each $\bfa$ 
by the isomorphisms $\iota(\bfa)\colon \gf\lra I_{\bfa}$ 
via the obvious formula $c\longmapsto cx^{\bfa}$.    
\end{remark}

Notice that $\modu{\gf\CJ}$ is an abelian 
category. 
Kernels and images are computed objectwise.
A sequence of $\gf\CJ$-modules~$L\TO M\to N$ 
is exact if and only if $L(j)\TO M(j)\TO N(j)$ 
is exact for every $j\in\obj\CJ$.
In particular it makes sense to speak of 
projective $\gf\CJ$-modules, for example, 
and to consider chain complexes of $\gf\CJ$-modules.
Notice that a chain complex of $\gf\CJ$-modules 
can equivalently be thought of as functor 
from~$\CJ$ to~$\ch(\modu{\gf})$; in symbols, 
$
\ch(\modu{\gf\CJ})=\fun\bigl(\CJ,\ch(\modu{\gf})\bigr).
$

\begin{example}\label{E:mfr-functor}
Let $\gf$ be a field, and 
let $\poset{B}$ be the Betti poset of 
a multigraded $R$-module $M$. 
The minimal free resolution 
$\cx{F}$ of $M$ over $R$ yields a 
$\gf\poset{B}$-chain complex 
$\cxB{F}$ given by   
\[
\cxB{F}(\bfa)= \cx[\bfa]{F} \quad\text{and}\quad  
\cxB{F}(\bfa\leq\bfb) = 
x^{\bfb-\bfa}\colon \cx[\bfa]{F}\lra \cx[\bfb]{F} 
\]
and for each $n\ge 0$ a $\gf\poset{B}$-module $\KB{F_n\ns}$ 
given by  
\[
\KB{F_n\ns}(\bfa)= (F_n)_{\bfa} \quad\text{and}\quad  
\KB{F_n\ns}(\bfa\leq\bfb) = 
x^{\bfb-\bfa}\colon (F_n)_{\bfa}\lra
             (F_n)_{\bfb}.
\]
\end{example}

Next we show how standard 
topological and combinatorial constructions used 
in the literature to study free resolutions of 
multigraded $R$-modules can be interpreted as 
$\gf\poset{P}$-chain complexes. 

\begin{example}\label{T:simplicial}
(Simplicial chain complexes) \  
Let $\Delta$ be a simplicial complex, and let 
$\poset{P}=\poset{P}(\Delta)$ be its face poset. 
For each face $H$ let $\Delta_H$ be the 
subcomplex formed by taking all faces contained in $H$.
Clearly $H_1\subseteq H_2$ if and only if 
$\Delta_{H_1}\subseteq\Delta_{H_2}$. By taking simplicial 
chain complexes with coeficients in $\gf$ we obtain 
a $\gf\poset{P}(\Delta)$-chain complex 
$\cx{\CS}=\cx{\CS}(\Delta,\gf)$ given by 
\[
\cx{\CS}(H)=\cx{C}\bigl(\Delta_H; \gf\bigr), 
\] 
where for $H_1\le H_2$ the morphism 
$
\cx{\CS}(H_1\le H_2)\colon 
\cx{C}\bigl(\Delta_{H_1}; \gf\bigr) \lra 
\cx{C}\bigl(\Delta_{H_2}; \gf\bigr) 
$
is the morphism of simplicial chain complexes 
induced by the inclusion 
$\Delta_{H_1}\subseteq\Delta_{H_2}$. 
\end{example}

\begin{example}
(Frames)  \ 
Let $\cx{U}=(U_k,\partial_k)$ 
be a chain complex of based free $\gf$-modules; in 
particular this includes the case of a \emph{frame} as 
defined in \cite{Peeva-Velasco} and therefore also covers 
the simplicial chain complex case from 
Example~\ref{T:simplicial}, 
and the case of a cellular chain complex of a CW-complex. 
Let $B_k$ be the fixed basis of $U_k$ and let 
$B=\disju_kB_k$. Let 
$\poset{P}$ be any poset structure on $B$ such that if 
$b\in B_k$ and $\partial_k(b)=\sum_{c\in B_{k-1}}a_cc$ with 
$a_c\ne 0$ then $b>c$ in $\poset{P}$. For any 
$b\in\poset{P}$ we write $U_k(b)$ for the free submodule of 
$U_k$ with basis the set 
$
B_k(b)=\{c\in B_k\mid c\le b\}.
$
Then clearly $\cx{U}(b)=(U_k(b),\partial_k)$ is a 
subcomplex of $\cx{U}$. Therefore we obtain a 
$\gf\poset{P}$-chain complex 
$\cx{\CF}=\cx{\CF}(\cx{U})$ given by 
\[
\cx{\CF}(b)=\cx{U}(b), 
\]
where for $c\le b$ in $\poset{P}$ the morphism 
$
\cx{\CF}(c\le b)\colon \cx{U}(c)\lra \cx{U}(b)
$
is just the inclusion $\cx{U}(c)\subseteq\cx{U}(b)$.  
\end{example}


\begin{example}
\label{ex:ordercx}
Let $\poset{P}$ be a poset, and let $\Delta(\poset{P})$ 
be the 
\emph{order simplicial complex} of $\poset{P}$.
The $n$-faces of $\Delta(\poset{P})$
are all strictly increasing chains 
$
A=\{a_0 < \dots < a_n\}
$ 
in $\poset{P}$. We always consider such a face $A$ 
with the orientation given by ordering its vertices in 
increasing order,  and  we  
call the maximal element $a_n=\max A$ 
the \emph{apex} of the face $A$. Now we 
obtain a natural $\gf\poset{P}$-chain complex 
$\cx{\CE}=\cx{\CE}(\poset{P},\gf)$ by taking 
simplicial chain complexes as follows: 
\[
\cx{\CE}(a)=\cx{C}\bigl(\Delta(\poset{P}_{\le a}); \gf\bigr)
\] 
where $\Delta(\poset{P}_{\le a})$ is the subcomplex 
of $\Delta(\poset{P})$ with faces those chains $A$ such that 
$\max A\le a$. When
$a\le b$ in $\poset{P}$, the morphism 
\[
\cx{\CE}(a\le b)\colon 
\cx{C}\bigl(\Delta(\poset{P}_{\le a}); \gf\bigr) \lra 
\cx{C}\bigl(\Delta(\poset{P}_{\le b}); \gf\bigr) 
\]
is defined to be the morphism of simplicial chain complexes 
induced by the inclusion  
$\poset{P}_{\le a}\subseteq \poset{P}_{\le b}$. 
By taking simplicial $n$-chains, we also obtain for each $n$ 
a $\gf\poset{P}$-module $\CE_n=\CE_n(\poset{P},\gf)$ with  
\[
\CE_n(a)=C_n\bigl(\Delta(\poset{P}_{\le a}); \gf\bigr).
\]
Our main goal for the next
two sections will be to show that this 
produces a canonical projective resolution of the 
constant $\gf\poset{P}$-module with value $\gf$. 
This fact is an important ingredient in the proofs 
of our main results. 
\end{example}


\section{Tensor products}
\label{S:tensor_products}

One should think of the sampling functors $\KP{(-)}$ and 
$\KB{(-)}$ from 
the previous section as sophisticated dehomogenization 
tools. Applying them  
forgets about those homogeneous components of our multigraded 
modules  whose degrees are not coming from 
$\poset P$ and $\poset{B}$, 
respectively. Furthermore, they 
have the effect of stripping from the 
remaining homogeneous components 
their actual multidegrees.  The canonical way to recover 
some or all of that lost information is through 
an appropriate notion of a tensor product.

\begin{definition}
Let $N$ be a $\gf\CJ^\op$-module 
and $M$ be a $\gf\CJ$-module.
Define their \emph{tensor product over~$\gf\CJ$} to be 
the $\gf$-module
\[
N\tensor_{\gf\CJ}M=
\left(\bigoplus_{j\in\obj\CJ}N(j)\tensor M(j)\right)/S
\]
where $S$ is the submodule generated by
\[
\SET{nu\tensor m - n\tensor um}
    {m\in M(j), n\in N(i), u\in\mor_\CJ(j,i)}
\]
and, to underscore the analogy with the tensor product 
of modules over rings,  
we use the shorthands $um$ and $nu$ for the values 
of the homomorphisms $M(u)$ and $N(u)$ at the elements 
$m$ and $n$ respectively; thus 
$um=M(u)(m)$ and $nu=N(u)(n)$. 
It is a straightforward consequence of the definition 
that the functor 
$N\tensor_{\gf\CJ}(-)$ preserves epimorphisms and 
direct sums. 
\end{definition}

\begin{example}
Let $G$ be a group, $M$ a left $\gf[G]$-module, and 
$N$ a right $\gf[G]$-module.
As explained in Example~\ref{ex:trivial-examples}, 
we can think of~$N$ as a $\gf\underline{G}^\op$-module 
and of~$M$ as a $\gf\underline{G}$-module.
Then $N\tensor_{\gf\underline{G}}M=N\tensor_{\gf[G]}M$.
\end{example}

Now we explain that for any $\gf\CJ^\op$-module~$N$ the functor 
$N\tensor_{\gf\CJ}(-)$ is right exact.
The proof of this proceeds as in the classical case 
when~$\CJ=[0]$, by adjointness.

\begin{definition}
Let $N$ be a $\gf\CJ^\op$-module 
and $T$ a $\gf$-module.
Define the 
$\gf\CJ$-module~$\hom_\gf(N,T)$ by sending 
$j$~to~$\hom_\gf(N(j),T)$.
Notice that $\hom_\gf(N,T)$ 
is covariant in~$\CJ$ since $N$ is contravariant.
\end{definition}

From the definitions and the usual tensor-hom adjunction for 
modules one sees that the functor
\[
\MOR{\hom_\gf(N,-)}{\modu{\gf}}{\modu{\gf\CJ}}
\]
is right adjoint to 
\[
\MOR{N\tensor_{\gf\CJ}(-)}{\modu{\gf\CJ}}{\modu{\gf}},
\]
i.e., for all $\gf\CJ$-modules~$M$ and all 
$\gf$-modules~$T$ there are natural isomorphisms
\[
\hom_\gf\bigl(N\tensor_{\gf\CJ}M,T\bigr)
\cong
\hom_{\gf\CJ}\bigl(M,\hom_\gf(N,T)\bigr)
.
\]
From this one concludes as usual 
(see for example~\cite{Weibel}*{Theorem~2.6.1 on page~51}) 
that $N\tensor_{\gf\CJ}(-)$ is right exact,
and $\hom_\gf(N,-)$ is left exact.

Notice that we can repeat everything done so far 
in this section when $N$ is a multigraded $R\CJ^\op$-module, 
i.e., a functor $\MOR{N}{\CJ^\op}{\mg\modu{R}}$.
If $N$ is multigraded $R\CJ^\op$-module and $M$ is a 
$\gf\CJ$-module, then their tensor product 
$N\tensor_{\gf\CJ}M$ is a multigraded $R$-module.
Moreover, if $T$ is a multigraded $R$-module, 
then $\hom_R(N,T)$ is a $\gf\CJ$-module, 
where $\hom_R$ denotes homogeneous $R$-linear homomorphisms 
of degree~$0$. Then the functor
\[
\MOR{\hom_R(N,-)}{\mg\modu{R}}{\modu{\gf\CJ}}
\]
is right adjoint to 
\[
\MOR{N\tensor_{\gf\CJ}(-)}{\modu{\gf\CJ}}{\mg\modu{R}}. 
\]
The fundamental example for us of a multigraded 
$R\CJ^\op$-module is defined next.

\begin{definition}
\label{def:shift}
Let $\poset P$ be a $\IZ^m$-graded poset with grading 
$\gr\colon\poset P\lra\IZ^m$. 
We define the $\gf\poset P^{op}$-module $\SR_{\gr}$ via 
the degree-shift operations by setting  
\[
\SR_{\gr}(a)=R(-\gr(a)) \quad\text{and}\quad 
\SR_{\gr}(a\le b)= x^{\gr(a)-\gr(b)}\colon R(-\gr(a))\lra R(-\gr(b))
\] 
for any morphism $a\le b$ in $\poset P^{op}$. We call $\SR_{\gr}$  
the \emph{multidegree shift functor}. 
Notice that the values 
of $\SR_{\gr}$ are not just $\gf$-modules but also 
free multigraded $R$-modules, 
and $\SR_{\gr}$ sends the morphisms of $\poset P^{op}$ 
to morphisms of multigraded $R$-modules;
i.e., $\SR_\gr$ is a multigraded $R\poset P^\op$-module.
When $\poset{P}$ is a subposet of $\IZ^m$ and $\gr$ is the 
inclusion map, we omit the subscript $\gr$ from the 
notation, and write just $\SR$. 
\end{definition}

When $\poset{P}$ is a $\IZ^m$-graded poset the 
sampling process introduced in the previous section 
is a special case of the above described $\hom$-construction.
Indeed, for every multigraded $R$-module~$T$, we have that 
\[
\hom_R(\SR_\gr,T)=\KP{T}
.
\]
Hence the sampling functor
\[
\MOR{\KP{(-)}}{\mg\modu{R}}{\modu{\gf\poset P}}
\]
is right adjoint to 
\[
\MOR{\SR_\gr\tensor_{\gf\poset P}(-)}
{\modu{\gf\poset P}}{\mg\modu{R}}
.
\]
This latter construction is  the main tool that we will 
use to recover information lost during sampling. 

\begin{definition}
Let $\poset P$ be a $\IZ^m$-graded poset. Let $M$ be 
a $\gf\poset P$-module. The multigraded $R$-module 
\[
\SR_{gr}\tensor_{\gf\poset P} M 
\]
is called the \emph{$\poset P$-homogenization} of $M$.
\end{definition}

The process of $\poset P$-homogenization is a functorial 
generalization of the standard homogenization technique 
used to convert a chain complex of free $\gf$-modules 
into a complex of free multigraded $R$-modules. We briefly 
demonstrate how this works in the case of a 
labeling of a simplicial complex $\Delta$ as in  
\cite{Bayer-Peeva-Sturmfels}*{Construction~2.1}.

\begin{example}\label{T:Taylor}
Let $I$ be a monomial ideal in $R$ with a set of minimal 
generators $\{x^{\bfa_0},\dots, x^{\bfa_n}\}$. Let $\Delta$ 
be a simplicial complex on the set $[n]$, let $\poset P$ be 
its face poset, and let $\cx\CS$ be the corresponding 
$\gf\poset P$-chain complex from Example~\ref{T:simplicial}. 
Let $\MOR{\gr}{\poset P}{\IZ^m}$ be the grading morphism 
given by the formula 
\[
\gr(\{i_1,\dots,i_k\})=\bfa_{i_1}\vee\dots\vee \bfa_{i_k},
\]
where $\vee$ denotes join in the lattice $\IN^m$. Then it 
is clear that $\gr$ is a morphism of posets and unravelling 
the definitions shows that the 
$\poset P$-homogenization 
\[
\cx T=\SR_{\gr}\tensor_{\gf\poset P}\cx\CS
\]
is exactly the complex produced by 
\cite{Bayer-Peeva-Sturmfels}{Construction~2.1}. 
In particular, $\Delta$ supports a free resolution of $I$ 
exactly when $\cx T$ is a resolution of $I$. 
\end{example}

Composing an appropriate sampling with an appropriate 
homogenization yields a functorial description of all  
``relabeling'' procedures considered in 
\cite{Gasharov-Peeva-Welker} and \cite{Peeva-Velasco}.   
Here is an example of this in the case of the  
relabeling Construction 3.2 from \cite{Gasharov-Peeva-Welker}.  
One can clearly produce in a similar manner 
examples corresponding to the procedures of $f$-degeneration 
and $f$-homogenization considered in 
\cite{Peeva-Velasco}. 

\begin{example}
Let $I,R,\Delta,\poset P,\gr,\cx\CS$, and $\cx T$ be as in
Example~\ref{T:Taylor}, and let $\poset L$ be the lcm-lattice 
of $I$. Note that the image of $\gr$ 
is $\hat{\poset L}=\poset L\setminus\hat 0$. Suppose 
$\MOR{f}{\hat{\poset L}}{\IZ^t}$ is a map of posets. Let 
$\gr'=f\circ\gr$, let $S=\gf[y_1,\dots,y_t]$ be a 
polynomial ring with the standard $\IZ^t$-grading, and 
let $\SS_{\gr'}$ and $\SS_f$ be the corresponding 
multidegree-shift functors. Thus $\SS_f$ is a 
multigraded $S\hat{\poset L}^{op}$-module, and $\SS_{\gr'}$ is a 
multigraded $S\poset P^{op}$-module.   
Now the $\poset P$-homogenization 
$\cx T'=\SS_{\gr'}\tensor_{\gf\poset P}\cx\CS$ is exactly the 
complex of free multigraded $S$-modules obtained 
from the complex $\cx T$ by applying the 
relabeling procedure from 
\cite{Gasharov-Peeva-Welker}*{Construction~3.2}
using the relabeling map $f$, and one can 
check directly from the definitions (or use 
Proposition~\ref{T:adjunction-iso} below) that also 
$\cx T'=\SS_f\tensor_{\gf\hat{\poset L}}\bigl(\cx T^{\hat{\poset L}}\bigr)$.   
\end{example}

The adjunction between homogenization and sampling yields
for each $\gf\CP$-module $M$ and 
each multigraded $R$-module~$T$ natural homomorphisms 
\[
\MOR{\eta}
{M}{\Bigl(\SR_\gr\tensor_{\gf\poset{P}}M\Bigr)^\poset{P}}
\quad\text{and}\quad
\MOR{\epsilon}
{\SR_\gr\tensor_{\gf\poset{P}}\bigl(\KP{T}\bigr)}{T}, 
\]
the unit and counit of the adjunction, respectively. 

\begin{proposition}\label{T:adjunction-iso} 
Let $\poset{P}$ be a subposet of $\IZ^m$, let $M$ be a 
$\gf\poset{P}$-module, and let $F$ be a free multigraded 
$R$-module such that $\poset{P}$ contains the degrees of 
the elements of some (hence every) homogeneous basis 
of $F$. Then:   
\begin{enumerate}
\item 
The unit of adjunction 
$
\MOR{\eta}{M}
{(\SR\tensor_{\gf\poset{P}}M)^\poset{P}}
$
is 
an isomorphism of $\gf\poset P$-modules.

\item 
The counit of adjunction 
$
\MOR{\epsilon}{\SR\tensor_{\gf\poset{P}}(\KP{F})}{F}
$ 
is an isomorphism of multigraded $R$-modules. 
\end{enumerate}
\end{proposition}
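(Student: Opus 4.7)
The plan is to verify each assertion by direct computation. Throughout, because $\poset P$ is a subposet of~$\IZ^m$ with inclusion as grading, the orders on $\poset P$ and on $\IZ^m$ agree on elements of~$\poset P$, and I write $R(-j)$ for $R(-\gr(j))$ when $j\in\poset P$.

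For~(1), fix $c\in\poset P$. By construction, $\eta_c\colon M(c)\to(\SR\tensor_{\gf\poset{P}}M)_{c}$ sends $m$ to the class of $1\tensor m$ in the $c$-summand, where $1=x^0\in R(-c)_c=\SR(c)_c$. A generating pure tensor of $(\SR\tensor_{\gf\poset{P}}M)_c$ has the form $x^{c-j}\tensor m_j$ for some $j\le c$ in $\poset P$ and $m_j\in M(j)$; applying the defining tensor-product relation with $u\colon j\le c$ and $n=1\in\SR(c)_c$ rewrites this as $1\tensor M(j\le c)(m_j)$ in the $c$-summand, so $\eta_c$ is surjective. For injectivity, I would define a candidate inverse $\phi_c\colon(\SR\tensor_{\gf\poset{P}}M)_c\to M(c)$ on pure tensors by $\lambda x^{c-j}\tensor m\mapsto\lambda M(j\le c)(m)$, and verify by a short calculation using functoriality of~$M$ that it respects the defining relations of the tensor product. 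The identity $\phi_c\circ\eta_c=\id_{M(c)}$ then forces $\eta_c$ to be injective, and naturality in~$c$ is automatic.

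For~(2), the first step is to reduce to the rank-one case. Both $\SR\tensor_{\gf\poset P}(-)$ (as a left adjoint) and $\KP{(-)}$ (from its objectwise definition) preserve arbitrary direct sums, and the counit is natural, so it suffices to treat $F=R(-d)$ for a single $d\in\poset P$. For such $F$, the sample $\KP{F}(c)$ equals $\gf x^{c-d}$ when $c\ge d$ in $\poset P$ and vanishes otherwise, with multiplication-by-monomial transitions. I then compute $(\SR\tensor_{\gf\poset P}\KP{R(-d)})_{\bfa}$ strand by strand. If $d\not\le\bfa$, no summand contributes and the strand is zero, matching $R(-d)_\bfa=0$. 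If $d\le\bfa$, the strand is generated by elements $x^{\bfa-j}\tensor x^{j-d}$ for $j\in\poset P$ with $d\le j\le\bfa$, and the defining tensor-product relations (applied with $u\colon d\le j$) identify every such generator with $x^{\bfa-d}\tensor 1$ in the $d$-summand; the counit sends this element to $x^{\bfa-d}\in R(-d)_\bfa$, establishing a degreewise isomorphism.

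The main obstacle is the well-definedness check for the candidate inverse $\phi_c$ in part~(1): one must confirm that no relations in $\SR\tensor_{\gf\poset P}M$ beyond the expected ``push to $c$'' identifications have been introduced. This reduces to a short bookkeeping calculation balancing the covariance of~$M$ against the contravariance of~$\SR$, exactly analogous to the classical verification of tensor--hom adjunction. Once that check is in hand, both parts follow.
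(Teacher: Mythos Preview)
Your proposal is correct and follows essentially the same approach as the paper: a direct strand-by-strand computation of $\SR\tensor_{\gf\poset P}M$ for part~(1), and reduction to $F=R(-d)$ followed by the analogous computation for part~(2). The only cosmetic difference is that the paper writes out the degree-$\bfg$ quotient of the tensor product and observes it collapses to $\gf\tensor M(\bfg)$, whereas you phrase the same computation as surjectivity of~$\eta_c$ plus injectivity via the explicit inverse~$\phi_c$; the ``short bookkeeping calculation'' you flag is exactly the implicit step behind the paper's collapse.
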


\begin{proof} 
Since $X=\SR\tensor_{\gf\poset{P}}M$ is just the 
quotient of the multigraded $R$-module 
$
G=
\bigoplus_{\bfa\in\poset{P}}
R(-\bfa)\tensor M(\bfa)
$ 
by the multigraded submodule $H$ spanned over $R$ 
by all elements of the form 
\[
\bigl(x^{\bfb-\bfa}\otimes m\bigr) - 
\bigl(1\otimes M(\bfa\le\bfb)(m)\bigr)
\]
with $m\in M(\bfa)$ and $\bfa\le\bfb$ in~$\poset{P}$, 
it is straightforward to see that for each $\bfg\in\poset{P}$ 
we get 
\[
\begin{aligned}
X_{\bfg} 
&=
\Bigl(
\bigoplus_{\bfa\le\bfg}\gf x^{\bfg-\bfa}\tensor M(\bfa)
\Bigr)\Big/ \gf\Bigl\langle 
\bigl(x^{\bfg-\bfa}\tensor m\bigr) - 
\bigl(1\tensor M(\bfa\le\bfg)(m)\bigr) \ 
\Big| \ m\in M(\bfa) 
\Bigr\rangle   \\ 
&= \gf\tensor M(\bfg).   
\end{aligned}
\] 
Since in $X$ we have 
$
x^{\bfg-\bfb}(1\otimes m)= 
x^{\bfg-\bfb}\otimes m = 
1 \otimes M(\bfb\le\bfg)(m)
$ 
whenever $\bfb\le\bfg$ 
and $m\in M(\bfb)$, the isomorphism (1) is immediate. 

Since tensoring by $\SR$ and applying $\KP{(-)}$ 
both preserve 
direct sums, and since $\poset P$ contains the degrees 
of the elements of any homogeneous basis of $F$, 
to prove (2) we just need to show that if 
$\bfa\in\poset{P}$ then  
$
\MOR{\epsilon}
{\SR\tensor_{\gf\poset{P}}\bigl(\KP{R(-\bfa)}\bigr)}
{R(-\bfa)}
$
is an isomorphism. However, it is immediate from the 
definition 
that
the source of~$\epsilon$
is the quotient of the multigraded $R$-module 
$
Y=
\bigoplus_{\bfa\le\bfg\in\poset{P}}
R(-\bfg)\tensor_\gf\gf x^{\bfg-\bfa}
$
by the multigraded $R$-submodule $Z$ generated 
over $R$ by all elements of the form 
$(x^{\bfg-\bfb}\otimes x^{\bfb-\bfa}) - (1\otimes x^{\bfg-\bfa})$
with $\bfa\le\bfb\le\bfg$ in $\poset{P}$, hence 
$Y/Z=R(-\bfa)\tensor\gf$. 
\end{proof}


\section{Free and projective \texorpdfstring{$\gf\CJ$}{kJ}-modules}
\label{S:free_and_projective}

In this section, for any small category $\CJ$ we want to 
define free $\gf\CJ$-modules and bases for them, in such a way that 
free $\gf\CJ$-modules are projective.
To this end we first need to define the 
underlying ``object'' of a $\gf\CJ$-module~$M$.  
The most convenient way of doing so is by 
forgetting not just the $\gf$-module structure on 
each~$M(j)$ but also the homomorphisms $M(j)\TO M(i)$ 
that we have for any morphism~$j\TO i$ in~$\CJ$.
So the underlying object of a $\gf\CJ$-module~$M$ 
is just the collection of sets~$M(j)$ indexed by 
the objects of~$\CJ$.
This data can conveniently be encoded into a functor 
from the discrete category~$\obj\CJ$ (i.e., the 
subcategory of~$\CJ$ where the only morphisms 
are the identities) to~$\sets$, i.e., 
an~$(\obj\CJ)$-set.
This defines a forgetful functor 
\[
\MOR{U}{\modu{\gf\CJ}}{(\obj\CJ)\D\sets}\,.
\]
The key observation is that~$U$ has a left adjoint 
\[
\MOR{L}{(\obj\CJ)\D\sets}{\modu{\gf\CJ}}\,,
\]
that is defined by sending an $(\obj\CJ)$-set~$B$ 
to the $\gf\CJ$-module
\[
LB=
\adjustlimits
\bigoplus_{j\in\obj\CJ}
\bigoplus_{\ B(j)\ }\ \gf[\mor_\CJ(j,-)]
\,.
\]
Notice that there is a natural morphism of 
$(\obj\CJ)$-sets 
$
\MOR{\eta}{B}{ULB}
$
(which will be the unit of the adjunction) that 
for every $j\in\obj\CJ$ sends $b\in B(j)$ 
to~$\id_j$ in the corresponding 
summand~$\gf[\mor_\CJ(j,j)]\subseteq LB(j)$ indexed by~$b$.

\begin{example}
Fix an object $j_0\in\obj\CJ$ and consider 
the $(\obj\CJ)$-set~$B$ given by
\[
B(j)=
\emptyset 
\quad\text{if $j\neq j_0$, and }\quad 
B(j_0)=\pt.        
\]
Then $LB=\gf[\mor_\CJ(j_0,-)]$, and $\MOR{\eta}{B}{ULB}$ 
sends $\pt$~to~$\id_{j_0}$.
\end{example}

In order to prove that $L$ is left adjoint to~$U$, 
i.e, that there are natural bijections
\[
\mor_\CJ(B,UM)\cong\hom_{\gf\CJ}(LB,M)
\]
for all $(\obj\CJ)$-sets~$B$ and $\gf\CJ$-modules~$M$, 
we explain in which 
sense $B$ is a basis for~$LB$.

\begin{definition}
Let $M$ be a $\gf\CJ$-module and let $B$ 
be an~$(\obj\CJ)$-set 
together with a morphism of $(\obj\CJ)$-sets 
$\MOR{\mu}{B}{UM}$.
We say that 
\emph{$M$~is free with basis~${B}\TO[\mu]{UM}$} 
if for every $\gf\CJ$-module~$N$ and every morphism 
of~$(\obj\CJ)$-sets $\MOR{g}{B}{UN}$ there is a 
unique homomorphism of $\gf\CJ$-modules $\MOR{G}{M}{N}$ 
such that $(UG)\circ\mu = g$.
\end{definition}

The following lemma is standard from the 
definitions and Yoneda's lemma.

\begin{lemma}
(a) 
If $B$ is an $(\obj\CJ)$-set then the 
$\gf\CJ$-module~$LB$ is free with 
basis $\MOR{\eta}{B}{ULB}$.
Therefore the functor~$L$ is left adjoint 
to the forgetful functor~$U$. 

(b) Free $\gf\CJ$-modules are projectives. \qed
\end{lemma}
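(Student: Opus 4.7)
For (a), the plan is to verify the universal property of $LB$ directly — this comes down to Yoneda's lemma applied summand by summand — and then deduce the adjunction as a formal consequence. Unravelling the definition gives
\[
LB(k) = \bigoplus_{j\in\obj\CJ}\bigoplus_{b\in B(j)}\gf[\mor_\CJ(j,k)].
\]
Given $\MOR{g}{B}{UN}$, I define $\MOR{G}{LB}{N}$ objectwise by sending the generator $u\in\mor_\CJ(j,k)$ in the $b$-th summand to $N(u)(g(b))$ and extending $\gf$-linearly. Functoriality of $N$ on composition in $\CJ$ makes $G$ natural in $k$; the element $\eta(b)$ is the generator $\id_j$ in the $b$-th summand of $LB(j)$, so $(UG)\circ\eta=g$ is immediate; and for any competing $G'$ with $(UG')\circ\eta=g$, naturality with respect to $u$ forces
\[
G'_k(u,b) = N(u)\bigl(G'_j(\id_j,b)\bigr) = N(u)(g(b)),
\]
giving uniqueness. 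Conceptually, this is the Yoneda isomorphism $\hom_{\gf\CJ}(\gf[\mor_\CJ(j,-)], N)\cong N(j)$ assembled over all summands of $LB$.

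Once the universal property is established, the adjunction follows formally: the assignment $G\mapsto (UG)\circ\eta$ is a natural bijection
\[
\hom_{\gf\CJ}(LB,M)\cong\mor_{(\obj\CJ)\D\sets}(B,UM),
\]
with naturality in $M$ coming from the definition of $UG$, and naturality in $B$ from a direct unwinding of how $L$ acts on morphisms of $(\obj\CJ)$-sets.

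For (b), I argue directly from the universal property rather than going through an isomorphism $M\cong LB$. Suppose $M$ is free with basis $\MOR{\mu}{B}{UM}$, let $\MOR{\pi}{N}{N'}$ be an epimorphism in $\modu{\gf\CJ}$, and let $\MOR{f}{M}{N'}$ be any morphism. Kernels, images, and cokernels in $\modu{\gf\CJ}$ are computed objectwise, so $\pi$ is an objectwise surjection; for each $j\in\obj\CJ$ I independently use the axiom of choice to lift the component $((Uf)\circ\mu)(j)\colon B(j)\to N'(j)$ to a function $h(j)\colon B(j)\to N(j)$, and assemble these components into a morphism $\MOR{h}{B}{UN}$ with $(U\pi)\circ h = (Uf)\circ\mu$. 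The universal property of $M$ then produces a unique $\MOR{\tilde f}{M}{N}$ with $(U\tilde f)\circ\mu = h$, and both $\pi\tilde f$ and $f$ extend $(Uf)\circ\mu$ along $\mu$, so the uniqueness clause forces $\pi\tilde f = f$.

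The only real obstacle in the whole argument is the notational bookkeeping through the double coproduct in the definition of $LB$; once the Yoneda identification on each summand is brought into focus, both parts are entirely formal.
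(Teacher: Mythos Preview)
Your proof is correct and follows exactly the approach the paper indicates: the paper does not give a detailed argument but simply remarks that the lemma ``is standard from the definitions and Yoneda's lemma'' and marks it with a \qed. Your write-up is a faithful unpacking of that hint---you invoke the Yoneda isomorphism $\hom_{\gf\CJ}(\gf[\mor_\CJ(j,-)],N)\cong N(j)$ on each summand of $LB$ to verify the universal property, deduce the adjunction formally, and then use the universal property together with objectwise surjectivity of epimorphisms to establish projectivity.
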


\begin{proposition}\label{T:free}
Let $\poset{P}$ be a subposet of $\IZ^m$. 

(a) Let $F$ be a free multigraded $R$-module with homogeneous 
basis $B$ such that $\poset{P}$  
contains the degrees of all the elements of $B$. Then 
the $\gf\poset{P}$-module $\CF=\KP{F}$ 
is free with basis $\KP{B}\os{\mu}\lra U\CF$, 
where the $(\obj\poset{P})$-set $\KP{B}$ is given by 
\[
\KP{B}(\bfa) = B_{\bfa} = 
\{b\in B|\deg(b)=\bfa\} \ \subset 
F_{\bfa}=\CF(\bfa)= U\CF(\bfa), 
\]
and $\mu(\bfa)$ is the inclusion map.  

(b) Let $\CG$ be a free $\gf\poset{P}$-module with basis 
$\CC\os{\mu}\lra U\CG$. Then $\SR\tensor_{\gf\poset{P}}\CG$ 
is a free multigraded $R$-module with homogeneous basis 
$B=\coprod_{\bfa\in\poset{P}}\{1\tensor\mu(c)\mid c\in\CC(\bfa)\}$.  
\end{proposition}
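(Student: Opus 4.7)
The plan is to exploit the fact that the sampling functor $\KP{(-)}$, the $\poset{P}$-homogenization functor $\SR\tensor_{\gf\poset{P}}(-)$, and the free functor $L\colon(\obj\poset{P})\D\sets\to\modu{\gf\poset{P}}$ all preserve direct sums. This reduces both parts to a single-generator calculation, after which (a) follows by inspection of the definitions, and (b) follows by combining (a) with Proposition~\ref{T:adjunction-iso}.

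For part~(a), I would first decompose $F=\bigoplus_{b\in B}R(-\deg(b))$ as a direct sum indexed by its homogeneous basis. Since $\KP{(-)}$ is defined objectwise it preserves direct sums, and $L$ does so because it is a left adjoint. Hence it suffices to show that for each $\bfa\in\poset{P}$, the $\gf\poset{P}$-module $\KP{R(-\bfa)}$ is free on a one-point basis concentrated at $\bfa$, i.e., isomorphic to $\gf[\mor_{\poset{P}}(\bfa,-)]$. This is immediate from the definitions: $\KP{R(-\bfa)}(\bfb)=R(-\bfa)_\bfb$ equals $\gf\cdot x^{\bfb-\bfa}$ when $\bfa\le\bfb$ and is $0$ otherwise, which matches $\gf[\mor_{\poset{P}}(\bfa,\bfb)]$ under the correspondence $x^{\bfb-\bfa}\leftrightarrow(\bfa\le\bfb)$; both structure morphisms are encoded by multiplication by the appropriate monomial. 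The basis element $1\in R(-\bfa)_\bfa$ corresponds to $\id_\bfa$, which is exactly $\mu(\bfa)$ in the statement.

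For part~(b), I would apply the same strategy in reverse. A free $\gf\poset{P}$-module $\CG$ with basis $\CC$ decomposes as $\CG=\bigoplus_{\bfa\in\poset{P}}\bigoplus_{c\in\CC(\bfa)}\gf[\mor_{\poset{P}}(\bfa,-)]$. Since $\SR\tensor_{\gf\poset{P}}(-)$ preserves direct sums, it is enough to compute $\SR\tensor_{\gf\poset{P}}\gf[\mor_{\poset{P}}(\bfa,-)]$ for a single $\bfa\in\poset{P}$. By part~(a) this representable is isomorphic to $\KP{R(-\bfa)}$, and then Proposition~\ref{T:adjunction-iso}(2) provides an isomorphism $\SR\tensor_{\gf\poset{P}}\KP{R(-\bfa)}\cong R(-\bfa)$ via the counit, under which $1\tensor\mu(c)$ corresponds to the free generator $1\in R(-\bfa)_\bfa$. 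Summing over all basis elements yields the claimed homogeneous basis of $\SR\tensor_{\gf\poset{P}}\CG$.

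The main point to be careful about is ensuring that the objectwise identification in part~(a) is natural in the poset variable, so that it truly assembles into an isomorphism of $\gf\poset{P}$-modules rather than merely a family of $\gf$-module isomorphisms. This reduces to checking that the transition maps on both sides agree, which is immediate from the formulas defining $\KP{(-)}$ and the representable functor, since both are given by the same monomial multiplications.
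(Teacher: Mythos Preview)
Your proof is correct; the route for part~(b) is essentially the paper's own (both deduce (b) from (a) via Proposition~\ref{T:adjunction-iso}(2), the only difference being whether one decomposes before or after invoking the counit isomorphism).

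For part~(a) the arguments genuinely differ. The paper verifies the universal property of freeness directly: given an arbitrary $\gf\poset{P}$-module $\CG$ and a map of $(\obj\poset{P})$-sets $g\colon\KP{B}\to U\CG$, it writes down the only possible formula \eqref{Eq:functoriality} for the extension $G(\bfa)(x^{\bfa-\bfg}b)$ and observes that this is well-defined because the elements $x^{\bfa-\bfg}b$ form a $\gf$-basis of $F_\bfa$. You instead reduce via direct sums to the rank-one case and identify $\KP{R(-\bfa)}$ with the representable $\gf[\mor_{\poset{P}}(\bfa,-)]$. Your approach is slightly more structural and separates the formal content (additivity of $\KP{(-)}$ and $L$) from the one computation that actually uses the hypothesis $\bfa\in\poset{P}$; the paper's approach is more elementary in that it avoids any appeal to representables or adjointness and makes the explicit extension formula visible. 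Either way, the substantive point is the same: for $\bfa,\bfb\in\poset{P}$ one has $\bfa\le\bfb$ in $\poset{P}$ exactly when $R(-\bfa)_\bfb\neq0$, which is what makes both the representable identification and the basis description of $F_\bfa$ work.
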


\begin{proof} 
Part (b) follows from part (a). Indeed,  
let $G$ be the free multigraded $R$-module with 
homogeneous basis the set $B$ from part (b). Then 
it is straightforward from the definitions 
in part (a) that 
$\KP{B}\cong\CC$ as $(\obj\poset{P})$-sets, hence 
the free by part~(a) $\gf\poset{P}$-module $\KP{G}$  
is isomorphic to $\CG$. Therefore  
$
G\cong\SR\tensor_{\gf\poset{P}}\KP{G}
 \cong\SR\tensor_{\gf\poset{P}}\CG.
$ 

We proceed with the proof of part (a). 
Let $\CG$ be any $\gf\poset{P}$-module  
and let $g\colon \KP{B} \lra U\CG$ 
be a morphism of $(\obj\poset{P})$-sets. 
We need to show that there is a unique morphism 
$G\colon \CF \lra \CG$ of $\gf\poset{P}$-modules that extends $g$. 
We note that any such extension $G$ of $g$ has to satisfy for 
any $\bfg\leq\bfa$ and any $b\in B_{\bfg}$ the equality 
\begin{equation}
\label{Eq:functoriality} 
G(\bfa)(x^{\bfa-\bfg}b)=
\CG(\bfg\leq\bfa)\bigl(g(\bfg)(b)\bigr). 
\end{equation}  
%
However $\CF(\alpha)= F_{\bfa}$ is 
a free $\gf$-module  
with basis all elements 
$x^{\bfa-\bfg}b$ such that $\bfg\le \bfa$ and $b\in B_{\bfg}$, and therefore  
there exists exactly one homomorphism of $\gf$-modules  
$G(\bfa)\colon \CF(\bfa)\lra \CG(\bfa)$ that satisfies 
\eqref{Eq:functoriality}. 
%
%
\end{proof}

\begin{corollary}
Let $I$ be a monomial ideal with Betti poset 
$\poset{B}$ over a field $\gf$. 
Let $\cx{F}$ be a minimal free multigraded 
resolution of $I$ over $R$. 
Let $\poset{P}$ be a subposet of $\IZ^m$ such that 
$\poset{B}\subseteq \poset{P}$. Then 
the $\gf\poset{P}$-chain complex $\cxP{F}$ is a resolution 
of $\KP{I}$ by free $\gf\poset{P}$-modules. In particular, if 
$\poset{B}\subseteq\poset{P}\subseteq\deg(I)$ then it is a free 
resolution of the constant $\gf\poset{P}$-module with 
value $\gf$.  \qed
\end{corollary}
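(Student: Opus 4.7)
The proof will essentially just assemble three facts already in place: Proposition~\ref{T:free}(a) for freeness, the exactness of the sampling functor recorded when it was introduced, and the remark identifying $\KP{I}$ with the constant $\gf\poset{P}$-module $\gf$ when $\poset{P}\subseteq\deg(I)$.

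First I would verify that each sampled module $\KP{F_n}$ is free over $\gf\poset{P}$. Let $B_n$ be a homogeneous basis of $F_n$. Because $\cx{F}$ is the \emph{minimal} free multigraded resolution, the multidegrees of the elements of $B_n$ are exactly the $n$th Betti degrees of $I$, so they lie in $\poset{B}\subseteq\poset{P}$. Therefore the hypothesis of Proposition~\ref{T:free}(a) is satisfied, and $\KP{F_n}$ is a free $\gf\poset{P}$-module with basis $\KP{B_n}$.

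Next I would deduce that the sampled complex is a resolution of $\KP{I}$. By definition the minimal free resolution fits into a long exact sequence
\[
\dotsb \lla \KP{F_n} \lla \KP{F_{n+1}} \lla \dotsb
\]
after sampling, and since $\KP{(-)}$ was shown to be an exact functor from $\mg\modu{R}$ to $\modu{\gf\poset{P}}$, exactness of $\dotsb\la F_1\la F_0\la I\la 0$ is preserved and we obtain exactness of $\dotsb\la \KP{F_1}\la \KP{F_0}\la \KP{I}\la 0$. Combined with the previous step this shows that $\cxP{F}$ is a free resolution of $\KP{I}$.

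Finally, for the ``in particular'' clause, the assumption $\poset{P}\subseteq\deg(I)$ lets me invoke the remark following the definition of sampling: the isomorphism $\iota(\bfa)\colon \gf\to I_{\bfa}$, $c\mapsto cx^{\bfa}$, provides a natural isomorphism between $\KP{I}$ and the constant $\gf\poset{P}$-module with value $\gf$, since for $\bfa\le\bfb$ in $\poset{P}$ the multiplication-by-$x^{\bfb-\bfa}$ map carries $cx^{\bfa}$ to $cx^{\bfb}$. Substituting this identification into the resolution obtained in the previous step yields the claim. There is no real obstacle here; the statement is a formal consequence of the tools already developed, and each step is essentially a one-line invocation.
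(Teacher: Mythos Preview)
Your proof is correct and is exactly the argument the paper has in mind: the corollary is stated with a bare \qed, and you have correctly identified the three ingredients it assembles---Proposition~\ref{T:free}(a) for freeness (using minimality of $\cx{F}$ to guarantee the basis degrees lie in $\poset{B}\subseteq\poset{P}$), exactness of the sampling functor for the resolution property, and the Remark for the identification of $\KP{I}$ with the constant module.
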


\begin{proposition}\label{T:resolution-P}
The $\gf\poset{P}$-chain complex $\cx{\CE}(\poset{P},\gf)$ 
defined in Example~\ref{ex:ordercx} is a free (and hence 
projective) resolution of the constant $\gf\poset{P}$-module 
with value~$\gf$.
\end{proposition}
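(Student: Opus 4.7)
The plan is to verify two properties in turn: first, that each $\CE_n$ is a free $\gf\poset{P}$-module; second, that the complex $\cx{\CE}$ together with a natural augmentation to the constant $\gf\poset{P}$-module with value $\gf$ (which I will denote $\underline{\gf}$) is acyclic. The second property is checked objectwise and will reduce to the contractibility of a cone.

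For freeness, I would identify $\CE_n$ with $LB_n$ for the $(\obj\poset{P})$-set $B_n$ defined by letting $B_n(p)$ be the collection of strictly increasing chains $(a_0<\cdots<a_n)$ in $\poset{P}$ with apex $a_n=p$, i.e., the $n$-faces of $\Delta(\poset{P})$ whose maximum vertex is $p$. Then for each $a\in\poset{P}$ the $\gf$-basis of $\CE_n(a)=C_n(\Delta(\poset{P}_{\le a});\gf)$ consists of all $n$-faces with apex $\le a$, which is $\coprod_{p\le a}B_n(p)$; this matches $(LB_n)(a)=\bigoplus_{p\le a}\gf[\mor_\poset{P}(p,a)]$. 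The structure morphism $\CE_n(a\le b)$ is induced by the simplicial inclusion $\Delta(\poset{P}_{\le a})\subseteq\Delta(\poset{P}_{\le b})$, which matches the inclusion of direct summands defining $LB_n(a\le b)$. Hence $\CE_n\cong LB_n$ is free, and therefore projective by the lemma above.

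To construct the augmentation, define $\MOR{\epsilon}{\CE_0}{\underline{\gf}}$ objectwise by sending each vertex of $\Delta(\poset{P}_{\le a})$ to $1\in\gf$; this is compatible with the inclusions $\poset{P}_{\le a}\subseteq\poset{P}_{\le b}$, so $\epsilon$ is a morphism of $\gf\poset{P}$-modules. Exactness of the augmented complex is checked objectwise: for each $a\in\poset{P}$ the poset $\poset{P}_{\le a}$ has maximum element $a$, so $\Delta(\poset{P}_{\le a})$ is the simplicial cone over $\Delta(\poset{P}_{<a})$ with apex $a$, hence is contractible; the vanishing of its reduced simplicial homology is exactly the required exactness.

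The main subtlety is the first step: one must identify $\CE_n$ as a \emph{free} $\gf\poset{P}$-module, not merely as an objectwise free $\gf$-module, which requires matching structure morphisms in addition to objectwise bases. Once that is done, exactness is a standard consequence of contractibility of cones. One could alternatively exhibit an explicit contracting homotopy of the form $s_n(a_0<\cdots<a_n)=\pm(a_0<\cdots<a_n<a)$ when $a_n<a$ and $0$ otherwise, but such an $s_n$ depends on $a$ and does not assemble into a morphism of $\gf\poset{P}$-modules, so invoking topological contractibility is both more conceptual and avoids unilluminating sign bookkeeping.
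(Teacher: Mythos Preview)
Your proof is correct and follows essentially the same approach as the paper: freeness of each $\CE_n$ via the $(\obj\poset{P})$-set $B_n$ of $n$-chains indexed by their apex, and objectwise exactness via the cone structure of $\Delta(\poset{P}_{\le a})$. You supply more detail on matching the structure morphisms and on the augmentation, but the argument is the same.
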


\begin{proof} 
$\cx{\CE}=\cx{\CE}(\poset{P},\gf)$ is a resolution because 
for each $a$ the simplicial complex $\Delta(\poset{P}_{\le a})$ 
is a cone with apex $a$. Also, it is straightforward from 
the definitions that each $\CE_n$ 
is a free $\gf\poset{P}$-module with basis $\MOR{\eta_n}{B_n}{U\CE_n}$ 
where $B_n$ is the $(\obj\poset P)$-set given by 
$
B_n(a)=
\{A \mid A \text{ is an $n$-face of $\Delta(\poset P)$ with } \max A=a\} 
$
and $\eta_n(a)$ is the inclusion map 
$B_n(a)\subset C_n(\Delta(\poset P_{\le a}); \gf) = U\CE_n.$ 
\end{proof}


\section{Main results}
\label{S:main_results}

Throughout this section $\gf$ is a field, 
$R=\gf[x_1,\dots, x_m]$, and $I$ is a monomial 
ideal in $R$ with Betti poset $\poset{B}$ over $\gf$.

\begin{theorem}\label{T:main-1}
Let $\poset{P}$ be any subposet of $\IZ^m$ such that  
$\poset{B}\subseteq\poset P\subseteq \deg(I)$.
Then the order complex $\Delta(\poset P)$ supports a 
free resolution of $I$. 
\end{theorem}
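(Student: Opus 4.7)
The plan is to identify the complex supported by $\Delta(\poset P)$ with the $\poset P$-homogenization of the canonical resolution $\cx\CE(\poset P,\gf)$ from Example~\ref{ex:ordercx}, and then show that this homogenization is a resolution of $I$ by comparing it with any minimal free multigraded resolution of $I$.

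First, I would unravel the two sides of the statement. By Proposition~\ref{T:free}(b), the $\poset P$-homogenization $\SR\tensor_{\gf\poset P}\CE_n$ is a free multigraded $R$-module whose homogeneous basis is indexed by the set of $n$-faces $A=\{a_0<\dotsb<a_n\}$ of $\Delta(\poset P)$, with the basis element corresponding to $A$ placed in multidegree $\max A=a_n$. Together with the differential induced from $\cx\CE$, this is precisely the complex $\SR\tensor_{\gf\poset P}\cx\CE(\poset P,\gf)$ that is said to be ``supported on $\Delta(\poset P)$'' when we label each vertex by its image under the inclusion $\poset P\subseteq\IZ^m$ and each face by the join of its vertices (which equals its apex). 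So it suffices to prove that $\SR\tensor_{\gf\poset P}\cx\CE(\poset P,\gf)$ is a free resolution of $I$.

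Next I would compare this complex with a minimal free multigraded resolution $\cx F$ of $I$. Because $\poset P\subseteq\deg(I)$, the sample $\KP{I}$ is isomorphic to the constant $\gf\poset P$-module with value $\gf$ by the remark following the definition of sampling. By the corollary to Proposition~\ref{T:free}, the sampled complex $\cxP F$ is therefore a free resolution of the constant $\gf\poset P$-module $\gf$ by free $\gf\poset P$-modules. On the other hand, Proposition~\ref{T:resolution-P} tells us that $\cx\CE(\poset P,\gf)$ is also a free resolution of the constant $\gf\poset P$-module $\gf$. Since free $\gf\poset P$-modules are projective, the two projective resolutions $\cx\CE(\poset P,\gf)$ and $\cxP F$ of the same object in the abelian category $\modu{\gf\poset P}$ are chain homotopy equivalent.

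Finally, I would apply the additive functor $\SR\tensor_{\gf\poset P}(-)$ to this chain homotopy equivalence. As an additive functor it preserves chain homotopy equivalences, so
\[
\SR\tensor_{\gf\poset P}\cx\CE(\poset P,\gf)
\ \simeq\
\SR\tensor_{\gf\poset P}\cxP F.
\]
The hypothesis $\poset B\subseteq\poset P$ means that $\poset P$ contains the multidegrees of the homogeneous basis elements of each $F_n$, so by Proposition~\ref{T:adjunction-iso}(2) the counit of adjunction gives an isomorphism $\SR\tensor_{\gf\poset P}\cxP F\cong\cx F$. Chaining these equivalences, $\SR\tensor_{\gf\poset P}\cx\CE(\poset P,\gf)$ is chain homotopy equivalent to $\cx F$, which is a resolution of $I$; hence it too is a (free) resolution of $I$, proving the theorem.

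The main conceptual obstacle is really just the identification in the first step: seeing that the formal object ``the complex supported on $\Delta(\poset P)$'' is exactly the $\poset P$-homogenization of $\cx\CE$. After that, the proof is a clean two-step comparison using the exactness of sampling, the projectivity of the two free resolutions of $\gf$, and the counit isomorphism from Proposition~\ref{T:adjunction-iso}(2); no combinatorial or homological grinding is required, which is precisely the payoff of the categorical framework developed in the previous sections.
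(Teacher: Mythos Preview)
Your proof is correct and follows essentially the same route as the paper's: both sample the minimal free resolution $\cx F$ to get a second projective resolution of the constant $\gf\poset P$-module, use the resulting chain homotopy equivalence with $\cx\CE(\poset P,\gf)$, and then apply $\SR\tensor_{\gf\poset P}(-)$. You are a bit more explicit in two places---you spell out the identification of the supported complex with $\SR\tensor_{\gf\poset P}\cx\CE$, and you cite Proposition~\ref{T:adjunction-iso}(2) for the isomorphism $\SR\tensor_{\gf\poset P}\cxP F\cong\cx F$ (the paper just asserts this equality)---but these are elaborations, not departures.
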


\begin{proof}
Recall from Definition~\ref{def:shift} that 
$
\SR\colon \poset{P}^{\op} \lra \mg\modu{R}
$
is the multidegree shift functor    
given objectwise by $\SR(\bfa)=R(-\bfa)$ and for  
$\bfa\leq\bfb$ in $\poset{P}^{op}$ the corresponding morphism 
$R(-\bfa) \lra R(-\bfb)$ is given by 
multiplication by $x^{\bfa-\bfb}$. 
Let $\cx{\CE}=\cx{\CE}(\poset{P},\gf)$ 
be the free resolution of the constant 
$\gf\poset{P}$-module with value $\gf$ from  
Proposition~\ref{T:resolution-P}. 
Let $\cx{F}$ be the minimal free resolution of $I$ 
over $R$, and let 
$\cx{G}=\SR\tensor_{\gf\poset{P}}\cx{\CE}$. 
Since both $\cxP{F}$ and $\cx{\CE}$ are projective 
resolutions of the constant $\gf\poset{P}$-module 
with value $\gf$, they are chain homotopy 
equivalent and hence so are  
$\cx{F}=\SR\tensor_{\gf\poset{P}}\cxP{F}$ 
and $\cx{G}$. Therefore $\cx{G}$ is a multigraded resolution 
of $I$ over $R$. Finally,   
since in each homological degree $n$ the 
$\gf\poset{P}$-module $\CE_n=\CE_n(\poset{P},\gf)$ is free, 
the corresponding 
$R$-module $G_n=\SR\tensor_{\gf\poset{P}}\CE_n$ is free 
multigraded.  
\end{proof}

\begin{theorem}\label{T:main-2} 
Let $\poset P$ be a subposet of $\IZ^m$ such that 
$\poset B=\poset B(I,\gf)\subseteq\poset P\subseteq\deg(I)$. 

(a) For any $\bfa\in\IZ^m$ and any $d\ge 0$ we have 
$\beta_{d,\bfa}(I)=0$ if $\bfa\notin\poset P$, otherwise 
\[
\beta_{d,\bfa}(I)=
\dim_\gf\RH_{d-1}\bigl(\Delta(\poset{P}_{<\bfa});\gf\bigr).  
\]

(b) In particular, 
$
\poset B(I,\gf)=
\{
a\in\poset P\mid
\RH_k\bigl(\Delta(\poset P_{<a});\gf\bigr)\ne 0
\text{ for some }k\}. 
$

(c) For  any $\bfa\in\poset B(I,\gf)$ the inclusion 
$\Delta(\poset B_{<\bfa})\subseteq\Delta(\poset P_{<\bfa})$ is 
a homology isomorphism over $\gf$. 
\end{theorem}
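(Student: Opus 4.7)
The plan is to compute $\beta_{d,\bfa}(I)=\dim_\gf\Tor^R_d(I,\gf)_\bfa$ from the multigraded free $R$-resolution $\cx G=\SR\tensor_{\gf\poset P}\cx\CE(\poset P,\gf)$ of~$I$ supplied by Theorem~\ref{T:main-1}. By Proposition~\ref{T:free}(b), $\cx G_d$ is a free multigraded $R$-module with basis $\{\,1\tensor A\,\}$ indexed by the $d$-faces $A=\{a_0<\dots<a_d\}$ of $\Delta(\poset P)$, each basis vector sitting in multidegree~$\max A$. Applying $-\tensor_R\gf$ and restricting to multidegree~$\bfa$ therefore produces a $\gf$-vector space with basis the $d$-faces of $\Delta(\poset P)$ whose apex is~$\bfa$; for $d\ge 1$ these are in bijection with the $(d-1)$-faces of $\Delta(\poset P_{<\bfa})$ via $\{a_0<\dots<a_{d-1}<\bfa\}\longleftrightarrow\{a_0<\dots<a_{d-1}\}$.

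Next I would track the boundary: the differential of $\cx\CE$ is the simplicial boundary, and when we expand $\partial(1\tensor A)$ the terms obtained by removing a non-apex vertex stay in multidegree~$\max A$, whereas the term obtained by removing the apex~$a_d$ picks up a factor $x^{a_d-a_{d-1}}\in\m$ that dies after $-\tensor_R\gf$. Matching what survives with the simplicial boundary in $\Delta(\poset P_{<\bfa})$ identifies the multidegree-$\bfa$ strand of $\cx G\tensor_R\gf$ with the augmented reduced chain complex of $\Delta(\poset P_{<\bfa})$ shifted up one homological degree, whose $d$th homology is $\RH_{d-1}(\Delta(\poset P_{<\bfa});\gf)$. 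This proves~(a) for $\bfa\in\poset P$; for $\bfa\notin\poset P$ the hypothesis $\poset B\subseteq\poset P$ forces $\bfa\notin\poset B$, so $\beta_{d,\bfa}(I)=0$ directly from the definition of $\poset B$. Part~(b) is then immediate.

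For~(c) I would exploit naturality in the subposet. Theorem~\ref{T:main-1} applies equally with $\poset B$ in place of $\poset P$, giving another free $R$-resolution $\cx K=\SR\tensor_{\gf\poset B}\cx\CE(\poset B,\gf)$ of~$I$ with basis indexed by the simplices of $\Delta(\poset B)$. The inclusion $\Delta(\poset B)\subseteq\Delta(\poset P)$ preserves apices and simplicial boundaries, so it induces an $R$-linear chain map $\MOR\iota{\cx K}{\cx G}$ sending $1\tensor\{a\}\mapsto 1\tensor\{a\}$ for each $a\in\poset B$, compatible with the natural augmentations $1\tensor\{a\}\longmapsto x^a$ onto~$I$. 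Since both complexes are free $R$-resolutions of the same module, $\iota$ is a chain homotopy equivalence, hence $\iota\tensor_R\gf$ is a quasi-isomorphism in every multidegree; under the identification above, its multidegree-$\bfa$ strand is precisely the chain map on shifted augmented chain complexes induced by $\Delta(\poset B_{<\bfa})\subseteq\Delta(\poset P_{<\bfa})$, which is therefore a homology isomorphism for each $\bfa\in\poset B$. I expect the main obstacle to be the bookkeeping in the second paragraph: tracking apices, signs, and which boundary terms survive modulo~$\m$ so as to identify the multidegree-$\bfa$ strand of $\cx G\tensor_R\gf$ with a genuine augmented simplicial chain complex in a way that is manifestly functorial in~$\poset P$.
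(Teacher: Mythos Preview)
Your proposal is correct and follows essentially the same approach as the paper: both use the resolution $\cx G=\SR\tensor_{\gf\poset P}\cx\CE(\poset P,\gf)$ from Theorem~\ref{T:main-1}, compute $\Tor$ by tensoring with~$R/\m$ and restricting to the strand in multidegree~$\bfa$, and for~(c) compare the two resolutions coming from $\poset B\subseteq\poset P$ via a chain homotopy equivalence. The only cosmetic difference is that the paper packages the strand as the quotient $\cx C\bigl(\Delta(\poset P_{\le\bfa})\bigr)/\cx C\bigl(\Delta(\poset P_{<\bfa})\bigr)$ and invokes the cone structure, whereas you carry out the equivalent identification explicitly via the apex-removal bijection and boundary tracking.
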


\begin{proof}
(a) 
Since the multidegrees of the basis elements of 
the free modules in the resolution 
$\cx{G}=\SR\tensor_{\gf\poset{P}}\cx{\CE}(\poset{P}, \gf)$ 
are all in $\poset{P}$, it is immediate 
that $\beta_{d,\bfa}(I)=0$ for $\bfa\notin\poset{P}$. 
If $\bfa\in\poset{P}$ then we have 
$
\Tor_d^R(I,R/\m)_\bfa 
= 
\HH_d(\cx[\bfa]{G}/\m \cx{G}\cap \cx[\bfa]{G}) 
=
\HH_d\Bigl( 
\cx{C}\bigl(\Delta(\poset{P}_{\le\bfa}); \gf\bigr)/ 
\cx{C}\bigl(\Delta(\poset{P}_{<\bfa})  ; \gf\bigr)
\Bigr).     
$
As $\Delta(\poset{P}_{\le\bfa})$ is a 
cone with apex $\bfa$ 
we get  
$
\Tor_d^R(I, R/\m)_\bfa 
\cong 
\RH_{d-1}\bigl(\Delta(\poset{P}_{<\bfa});\gf\bigr). 
$

(b) is immediate from (a) by the definition of Betti poset. 

(c) The inclusion $\poset B\subset\poset P$ induces canonically 
a morphism of $\IZ^m$-graded free resolutions   
$
\cx E = 
\SR\tensor_{\gf\poset{B}}\cx{\CE}(\poset B,\gf) \lra 
\SR\tensor_{\gf\poset{P}}\cx{\CE}(\poset P,\gf) = 
\cx G
$
which is an isomorphism in homology, hence a chain 
homotopy equivalence. Therefore it stays a chain homotopy 
equivalence after tensoring by the $\IZ^m$-graded $R$-module 
$R/\m$, hence induces an isomomorphism 
in homology between the 
corresponding multigraded strands in degree $\bfa$ for each 
$\bfa\in\poset B$. But for these graded strands we have 
$
(\cx E\tensor_R R/\m)_\bfa 
= 
\cx[\bfa]{E}/(\m \cx{E}\cap \cx[\bfa]{E}) 
=
\cx{C}\bigl(\Delta(\poset{B}_{\le\bfa}); \gf\bigr)/ 
\cx{C}\bigl(\Delta(\poset{B}_{<\bfa})  ; \gf\bigr) 
$
and  
$      
(\cx G\tensor_R R/\m)_\bfa 
= 
\cx[\bfa]{G}/(\m \cx{G}\cap \cx[\bfa]{G}) 
=
\cx{C}\bigl(\Delta(\poset{P}_{\le\bfa}); \gf\bigr)/ 
\cx{C}\bigl(\Delta(\poset{P}_{<\bfa})  ; \gf\bigr). 
$
Therefore the map of pairs 
$
\bigl(\Delta(\poset B_{\le\bfa}), \Delta(\poset B_{<\bfa})\bigr)
\lra 
\bigl(\Delta(\poset P_{\le\bfa}), \Delta(\poset P_{<\bfa})\bigr)
$
induced by the inclusion $\poset B\subseteq \poset P$ is an 
isomorphism in relative homology over $\gf$. Since 
$\Delta(\poset B_{\le\bfa})$ and $\Delta(\poset P_{\le\bfa})$ 
are cones with apex $\bfa$, the desired conclusion is 
immediate.  
\end{proof}

Finally, we show that the isomorphism class of 
the Betti poset completely determines 
the structure of the minimal 
free resolution of $I$. The following theorem also 
generalizes, with a new proof,
\cite{Gasharov-Peeva-Welker}*{Theorem~3.3}.  

\begin{theorem}\label{T:main-3}
Let $\poset P$ be a subposet of $\IZ^m$ such that 
$\poset B(I,\gf)\subseteq\poset P\subseteq\deg(I)$. 
Let $\cx{F}$ be a minimal free 
multigraded resolution of $I$ over $R$, and let 
$\cx{\CF}=\cxP{F}$ be the $\poset P$-sample of $\cx F$. 
Let $S=\gf[y_1,\dots, y_t]$ be another polynomial 
ring over the field $\gf$, and let $J$ be a monomial 
ideal of $S$ such that $\poset P$ is isomorphic to 
a subposet $\poset Q$ of $\IZ^t$ with   
$\poset{B}(J,\gf)\subseteq\poset Q\subseteq\deg(J)$. 
Fix one such isomorphism $\gr\colon\poset P\lra\poset Q$. 

(a) The isomorphism $\gr$ maps $\poset B(I,\gf)$ 
isomorphically onto $\poset B(J,\gf)$.  

(b) Viewing $\gr$ as a morphism $\gr\colon\poset P\lra\IZ^t$, 
consider the 
multidegree-shift functor 
$\SS_\gr\colon\poset{P}^{\op}\lra \mg\modu{S}$. 
Then the homogenization 
$\SS_\gr\tensor_{\gf\poset{P}}\cx{\CF}$ is a 
minimal free multigraded 
resolution of $J$ over $S$. 
\end{theorem}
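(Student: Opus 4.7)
My plan is to deduce (a) directly from Theorem~\ref{T:main-2}(b) via transport along $\gr$, and to prove (b) by first recognizing $\cx{\CF}$ as a projective resolution of the constant $\gf\poset{P}$-module $\gf$, then combining this with Theorem~\ref{T:main-1} applied to $J$ via the isomorphism $\gr$. For part (a), Theorem~\ref{T:main-2}(b) says that $\poset{B}(I,\gf)$ consists of those $\bfa \in \poset{P}$ at which some reduced homology $\RH_k(\Delta(\poset{P}_{<\bfa});\gf)$ is nonzero, and an analogous description holds for $\poset{B}(J,\gf)$ in $\poset{Q}$. Since $\gr$ is a poset isomorphism, it restricts to an order isomorphism $\poset{P}_{<\bfa} \cong \poset{Q}_{<\gr(\bfa)}$ and hence induces a simplicial isomorphism of the associated order complexes; passing to reduced homology shows $\bfa \in \poset{B}(I,\gf)$ if and only if $\gr(\bfa) \in \poset{B}(J,\gf)$.

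For (b), first observe that $\cx{\CF} = \cxP{F}$ is itself a free resolution of the constant $\gf\poset{P}$-module $\gf$. Exactness follows because the sampling functor is exact and $\KP{I} \cong \gf$ by the remark in Section~\ref{S:modules_over_category} (using $\poset{P} \subseteq \deg(I)$). Freeness of each term $\cxP{F_d}$ is Proposition~\ref{T:free}(a), since every multidegree of a basis element of $F_d$ is a Betti degree of $I$ and hence lies in $\poset{B}(I,\gf) \subseteq \poset{P}$. Combined with Proposition~\ref{T:resolution-P}, I conclude that $\cx{\CF}$ and $\cx{\CE}(\poset{P},\gf)$ are chain homotopy equivalent as projective resolutions of the same $\gf\poset{P}$-module.

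By the hypothesis $\poset{B}(J,\gf) \subseteq \poset{Q} \subseteq \deg(J)$, Theorem~\ref{T:main-1} applied to $J$ and $\poset{Q}$ shows that $\SS \tensor_{\gf\poset{Q}} \cx{\CE}(\poset{Q},\gf)$ is a free multigraded resolution of $J$ over $S$. Transporting across the isomorphism $\gr$ identifies this complex with $\SS_\gr \tensor_{\gf\poset{P}} \cx{\CE}(\poset{P},\gf)$. Applying the additive functor $\SS_\gr \tensor_{\gf\poset{P}} (-)$ to the chain homotopy equivalence from the previous paragraph yields that $\SS_\gr \tensor_{\gf\poset{P}} \cx{\CF}$ is chain homotopy equivalent to a free resolution of $J$, and is therefore itself a free multigraded resolution of $J$ over $S$.

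For minimality, I compare ranks with Betti numbers. By Proposition~\ref{T:free}(b), the $S$-rank of $\SS_\gr \tensor_{\gf\poset{P}} \cxP{F_d}$ in multidegree $\gr(\bfa)$ equals $|B_{d,\bfa}| = \beta_{d,\bfa}(I)$, which by part (a) together with Theorem~\ref{T:main-2}(a) equals $\beta_{d,\gr(\bfa)}(J)$; hence the constructed resolution realizes the Betti numbers of $J$ and is therefore minimal. The main technical point I expect to verify carefully is the identification of $\SS \tensor_{\gf\poset{Q}} \cx{\CE}(\poset{Q},\gf)$ with $\SS_\gr \tensor_{\gf\poset{P}} \cx{\CE}(\poset{P},\gf)$ across $\gr$, which amounts to routine unraveling of the definitions of the order complex and the multidegree-shift functor.
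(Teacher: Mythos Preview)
Your proof is correct and shares the paper's core strategy: recognize $\cx{\CF}=\cxP{F}$ as a free $\gf\poset{P}$-resolution of the constant module~$\gf$, compare it with another such resolution by chain homotopy equivalence, tensor with~$\SS_\gr$, and finish by a rank count using Theorem~\ref{T:main-2}. The difference lies in the choice of comparison object. The paper identifies $\poset{P}$ with $\poset{Q}$ and compares $\cxP{F}$ directly with $\cxP{G}$, the sample of a minimal free resolution $\cx{G}$ of~$J$; then Proposition~\ref{T:adjunction-iso}(2) gives $\SS\tensor_{\gf\poset{P}}\cxP{G}\cong\cx{G}$ immediately, so no detour through the order complex or Theorem~\ref{T:main-1} is needed. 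Your route via $\cx{\CE}(\poset{P},\gf)$ and Theorem~\ref{T:main-1} is perfectly valid but slightly less direct, since the proof of Theorem~\ref{T:main-1} already performs internally the comparison with $\cxP{G}$ (for $J$) that the paper uses here explicitly. In effect, the paper's argument is self-contained for part~(b), while yours treats Theorem~\ref{T:main-1} as a black box; both reach the same conclusion with the same conceptual ingredients.
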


\begin{proof}
Part (a) is immediate from Theorem~\ref{T:main-2}, and 
we proceed with the proof of part (b). 
Identifying $\poset P$ with $\poset Q$,  
we consider $\gr$ as the inclusion map and write $\SS$ for 
$\SS_\gr$.  
Let $\cx{G}$ be a minimal free resolution of $J$ over $S$. 
Then the $\gf\poset{P}$-chain complexes 
$\cx{\CF}$ and $\cxP{G}$ are 
free resolutions of the constant $\gf\poset{P}$-module 
with value $\gf$, hence are chain homotopy equivalent. 
Hence so are the the complexes  
$\cx{\CH}=\SS\otimes_{\gf\poset{P}}\cx{\CF}$ and 
$\SS\tensor_{\gf\poset{P}}\cxP{G}\cong \cx{G}$. 
Therefore $\cx{\CH}$ is a free resolution of $J$ over $S$, 
and since  by Theorem~\ref{T:main-2} and 
Proposition~\ref{T:free} it has 
the correct ranks for its free modules, 
it is a minimal free resolution. 
\end{proof}

\begin{example}
Let $R=\gf[a,b,c,d,e]$ and let $I=(ac,ae,bd,de)$.
The Hasse diagram of $\poset{L}(I)\setminus\hat 0$ 
is given below. 
Examining the filters 
$\bigl(\poset{L}(I)\setminus\hat 0\bigr)_{<x}$ for all 
$x\in\poset{L}(I)\setminus\hat 0$ shows 
that for any field $\gf$ the Betti poset 
$\poset{B}(I,\gf)$ consists of the non-circled elements. 
\[
\begin{tikzpicture}
[notinB/.style={rectangle, rounded corners=1.5mm, 
                densely dashed, draw=black}]
\node (abcde)          at (-.5,4) {$abcde$};
\node (acde)  [notinB] at  (-1,3) {$\scriptstyle acde$};
\node (abde)  [notinB] at  (+1,3) {$\scriptstyle abde$};
\node (abcd)           at  (-4,2) {$abcd$};
\node (ace)            at  (-2,2) {$ace$};
\node (ade)            at   (0,2) {$ade$};
\node (bde)            at  (+2,2) {$bde$};
\node (ac)             at  (-3,0) {$ac$};
\node (ae)             at  (-1,0) {$ae$};
\node (bd)             at  (+1,0) {$bd$};
\node (de)             at  (+3,0) {$de$};
\draw (de) to (ade) to (ae) to (ace) to (ac) to (abcd) 
           to (abcde) to (abde) to (bde) to (de)
      (abcde) to (acde) to (ace);
\draw[preaction={draw=white, -, line width=5pt}] 
      (bde) to (bd) to (abcd);
\draw[densely dashed] (ade) to (acde)
                      (ade) to (abde)
                      (ade) to (abcde);
\end{tikzpicture}
\]
When we investigate in a similar manner 
the monomial ideal $J=(wx,xy,wz,yz)$
in the polynomial ring 
$S=\gf[w,x,y,z]$, we see that 
for every field $\gf$ we have 
$\poset{B}(J,\gf)=\poset{L}(J)\setminus\hat 0$, 
with the following Hasse diagram.
\[
\begin{tikzpicture}
\node (abcde)          at (-.5,4) {$wxyz$};
\node (abcd)           at  (-4,2) {$wxz$};
\node (ace)            at  (-2,2) {$wxy$};
\node (ade)            at   (0,2) {$xyz$};
\node (bde)            at  (+2,2) {$wyz$};
\node (ac)             at  (-3,0) {$wx$};
\node (ae)             at  (-1,0) {$xy$};
\node (bd)             at  (+1,0) {$wz$};
\node (de)             at  (+3,0) {$yz$};
\draw (de) to (ade) to (ae) to (ace) to (ac) 
           to (abcd) to (abcde) to (bde) to (de)
      (ace) to (abcde) to (ade);
\draw[preaction={draw=white, -, line width=5pt}] 
      (bde) to (bd) to (abcd);
\end{tikzpicture}
\]
Thus we have $\poset{L}(I)\not\cong\poset{L}(J)$ but 
$\poset{B}(I,\gf)\cong\poset{B}(J,\gf)$ for every field 
$\gf$. 
\end{example}

\begin{remark} 
In the rather simple example above, there exists a join preserving 
map from $\poset L(I)$ to $\poset L(J)$ that is a bijection 
on the atoms, and therefore 
it is still possible to obtain the minimal free resolution 
of $J$ by applying the relabeling Construction 3.2 from 
\cite{Gasharov-Peeva-Welker} to the minimal free resolution 
of $I$. For a more sophisticated example, where that kind of 
join-preserving map does not exist on the level of lcm-lattices, we refer 
the reader to \cite{Clark-Mapes2}*{Example~2.3}. 
\end{remark}


\section{When is a poset the Betti poset of an ideal?}
\label{S:when-is-betti}

Given a finite poset $\poset{P}$ and $x\in \poset{P}$, 
we write $A_x$ for the set of all minimal elements 
of $\poset{P}$ that are less than 
or equal to $x$. The poset  $\poset{P}$ is called 
\emph{atomic} if each $x\in\poset{P}$ is the join (the unique 
least upper bound) in $\poset{P}$ of the elements of $A_x$.   
We consider 
every atomic poset $\poset{P}$ with set of minimal elements 
$A$ naturally 
as a subposet of the Boolean lattice $\Sigma(A)$ 
(the set of all subsets of $A$, ordered by inclusion) 
via the embedding map $\sigma\colon \poset{P} \lra \Sigma(A)$ 
given by $\sigma(x)=A_x$; in particular we will not 
distinguish between an element $a\in A$ and the singleton 
$\{a\}\in\Sigma(A)$. It is an easy  
exercise for the reader to check that $\sigma$ 
preserves meets (unique greatest lower bounds, 
whenever they exist) of elements in $\poset{P}$. 
When $\poset{P}$ is atomic, we write 
$M(\poset{P})$ for the subposet  
of $\Sigma(A)$ with elements all meets in $\Sigma(A)$ 
of subsets of $\poset{P}$. 

\begin{lemma}\label{T:atomic-lattice}
Let $\poset{P}$ be a finite atomic poset with set of 
minimal elements $A$. Then $M(\poset{P})$ 
is a finite atomic lattice with set of atoms $A$. 
\end{lemma}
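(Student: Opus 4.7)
The plan is to identify $M(\poset{P})$ with the meet-closure of the image $\sigma(\poset{P})$ inside the finite Boolean lattice $\Sigma(A)$, and then read off all four required properties (finiteness, lattice, atomicity, set of atoms equal to $A$) from this description. Since $\poset{P}$ is finite the set $A$ is finite, so $\Sigma(A)$ and hence $M(\poset{P})$ are finite as well.

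To establish the lattice structure I would first verify that $M(\poset{P})$ is closed under binary meets in $\Sigma(A)$: if $Y_i=\bigcap_{x\in S_i}A_x$ for $i=1,2$, then $Y_1\cap Y_2=\bigcap_{x\in S_1\cup S_2}A_x\in M(\poset{P})$. Next I would observe that the top element $A$ of $\Sigma(A)$ belongs to $M(\poset{P})$ as the meet over the empty subset of $\poset{P}$. A finite meet-semilattice with a top element is automatically a lattice (each join is recoverable as the meet of all upper bounds), which gives the lattice property for free.

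To identify the atoms I would first note that every singleton $\{a\}$ with $a\in A$ lies in $M(\poset{P})$, since the atomicity of $\poset{P}$ gives $\sigma(a)=A_a=\{a\}$. Assuming $|A|\ge 2$ (the case $|A|\le 1$ being immediate), the intersection of any two distinct such singletons is empty, so the bottom $\hat 0$ of $M(\poset{P})$ is $\emptyset$; since nothing in $\Sigma(A)$ sits strictly between $\emptyset$ and $\{a\}$, each $\{a\}$ is an atom. Conversely, any atom $Y$ of $M(\poset{P})$ is a minimal nonempty subset of $A$ lying in $M(\poset{P})$, and picking $a\in Y$ forces $\{a\}\subseteq Y$ with $\{a\}\in M(\poset{P})$, so $Y=\{a\}$.

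Finally, for atomicity I would fix $Y\in M(\poset{P})$ with $Y\ne\hat 0$ and let $Z$ denote the join, computed inside $M(\poset{P})$, of those atoms $\{a\}$ with $a\in Y$. Then $Y$ is itself an upper bound in $M(\poset{P})$ for these atoms, so $Z\subseteq Y$; and $Z$, being a subset of $A$ that contains each $a\in Y$, must satisfy $Y\subseteq Z$, so $Y=Z$. The only step that is not purely formal is this last one: the join computed inside $M(\poset{P})$ may a priori differ from the union computed inside $\Sigma(A)$, and the argument works precisely because $Y$ itself already belongs to $M(\poset{P})$ and serves as the upper bound that pins the join down to $Y$.
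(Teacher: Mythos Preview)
Your proof is correct and follows essentially the same approach as the paper's. Both arguments observe that $M(\poset{P})$ is a finite meet-semilattice (hence a lattice), that the singletons $\{a\}$ for $a\in A$ are its atoms, and then prove atomicity by the same two-inequality argument: for $Y\in M(\poset{P})$ the join $Z=\bigvee_{a\in Y}\{a\}$ in $M(\poset{P})$ satisfies $Z\le Y$ since $Y$ is an upper bound, and $Y\le Z$ since joins in $M(\poset{P})$ dominate joins (unions) in $\Sigma(A)$. Your write-up is somewhat more explicit about auxiliary points (the top element, why the singletons exhaust the atoms), but there is no substantive difference in strategy.
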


\begin{proof}
$M(\poset{P})$ is a finite meet-semilattice by construction, 
hence a finite lattice. 
Let $Z\in M(\poset{P})$. Since $Z\subseteq A$ and for 
each $a\in A$ the set $\{a\}$ is an atom of $M(\poset{P})$,  
it is enough to show that 
$Z=\bigvee_{a\in Z}\{a\}$. Let $V=\bigvee_{a\in Z}\{a\}$.  
Since joins in $M(\poset{P})$ are greater than or equal 
to joins in $\Sigma(A)$, we get that $V\ge Z$. 
On the other hand $Z$ is trivially an upper bound 
in $M(\poset{P})$ for the set 
$\bigl\{\{a\}\mid a\in Z \bigr\}$,  
hence $Z\ge V$.   
\end{proof}

Let $I$ be a monomial ideal in the polynomial ring  
$R=\gf[x_1,\dots, x_m]$ over the field $\gf$, and 
let $\poset{B}$ be its Betti poset over $\gf$. Thus, the 
set $A$ of minimal elements of $\poset{B}$ is exactly the 
set of degrees 
in $\IZ^m$ of the minimal generators of $I$. 
Let $\poset{L}=\poset{L}(I)$ be the lcm-lattice of $I$. 
In particular, the poset $\poset{L}\setminus\hat 0$ is 
atomic with 
set of minimal elements $A$, contains $\poset B$,  
and by 
\cite{Gasharov-Peeva-Welker}*{Theorem~2.1} 
an element 
$y\in\poset{L}\setminus\hat 0$ is not 
in $\poset{B}$ exactly when the relative homology 
$\RH_n\bigl(\Delta(\poset{L}_{<y}\setminus\hat 0),\gf\bigr)=0$ 
for all $n$.

\begin{lemma}
The Betti poset $\poset{B}$ is an atomic poset. 
\end{lemma}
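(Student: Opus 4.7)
The plan is to use the lcm-lattice $\poset{L}=\poset{L}(I)$ as a convenient ambient structure. Three easy observations drive the argument. First, $\poset{B}\subseteq\poset{L}\setminus\hat 0$ as subposets of $\IZ^m$, as recalled just above. Second, the minimal elements of $\poset{B}$ coincide with the atoms of $\poset{L}$, namely the set $A$ of $\IZ^m$-degrees of the minimal generators of $I$; indeed, each such degree always appears as some $\beta_{0,\bfa}(I)\ne 0$. Third, the poset $\poset{L}\setminus\hat 0$ is itself atomic: every $x\in\poset{L}$ is by construction the lcm of some subset of $A$, hence equals the join (lcm) of the set of all atoms of $\poset{L}$ that are $\le x$.

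With these in place, fix $x\in\poset{B}$ and let $A_x$ denote the set of minimal elements of $\poset{B}$ that are $\le x$. By the second observation $A_x$ is also the set of atoms of $\poset{L}$ below $x$, and by the third $x=\bigvee A_x$ in $\poset{L}$. Clearly $x$ is an upper bound of $A_x$ in $\poset{B}$. If $y\in\poset{B}$ is another such upper bound, then since the partial order on $\poset{B}$ is the one inherited from $\IZ^m$ (and hence from $\poset{L}$), we have $y\ge a$ in $\poset{L}$ for every $a\in A_x$, and therefore $y\ge \bigvee A_x=x$ in $\poset{L}$, and consequently in $\poset{B}$. Thus $x$ is the (unique) least upper bound of $A_x$ in $\poset{B}$, i.e., the join of $A_x$ there, proving atomicity.

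There is essentially no obstacle here; the only subtlety worth flagging is the customary one of keeping track of the poset in which the join is taken. The point is that the ambient join $\bigvee A_x$ computed in $\poset{L}$ already happens to lie in $\poset{B}$ (being $x$ itself), so it automatically serves as the join inside the subposet $\poset{B}$.
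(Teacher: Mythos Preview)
Your proof is correct and follows essentially the same approach as the paper: both arguments embed $\poset{B}$ in the lcm-lattice $\poset{L}$, use that the minimal elements of $\poset{B}$ are the atoms of $\poset{L}$ and that $\poset{L}$ is join-generated by these atoms to conclude $x=\bigvee A_x$ in $\poset{L}$, and then deduce that any upper bound $y\in\poset{B}$ of $A_x$ satisfies $y\ge x$. Your write-up is slightly more explicit about the identification of $A_x$ with the atoms of $\poset{L}$ below $x$ and about where each join is taken, but the argument is the same.
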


\begin{proof} 
Let $x\in\poset{B}$, then clearly $x$ is an upper bound for 
the set $A_x$. Let $y\in\poset{B}$ be any other upper bound 
for $A_x$. Then $y$ is also an upper bound for $A_x$ in 
the lcm-lattice $\poset{L}$. Therefore $y$ is greater  than 
or equal to the join in $\poset{L}$ of the elements in $A_x$. 
Since $\poset{L}$ is join-generated by the elements of $A$ 
it follows that $z=\bigvee A_z$ for each 
$z\in\poset{L}$; in 
particular $\bigvee A_x=x$. Therefore $y\ge x$.  
\end{proof}

\begin{proposition}\label{T:if-Betti}
Let $I$ be a monomial ideal 
with Betti poset $\poset{B}$. 
Let $A$ be the set of 
minimal elements of $\poset{B}$ and let $M(\poset{B})$ be 
the corresponding subposet of $\Sigma(A)$. 
 
Then for each $x\in M(\poset{B})\setminus\hat 0$ we have that 
the element $x$ 
is not in $\poset{B}$ precisely when 
$
\RH_n\bigl(\Delta(M(\poset{B})_{<x}\setminus\hat 0);\gf\bigr)=0
$ 
for all $n$.
\end{proposition}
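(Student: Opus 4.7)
The plan is to identify $M(\poset B)\setminus\hat 0$ with a subposet $\poset P$ of $\IZ^m$ satisfying $\poset B\subseteq\poset P\subseteq\deg(I)$, and then apply Theorem~\ref{T:main-2}\,(b) directly to $\poset P$.

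The key tool is the \emph{lcm map} $\MOR{\tau}{\Sigma(A)}{\IN^m}$ given by $\tau(S)=\bigvee_{\bfa\in S}\bfa$, the coordinatewise maximum, with $\tau(\emptyset)=\mathbf 0$. I would observe that $\tau$ is a one-sided inverse to the atom-set embedding $\MOR{\sigma}{\poset L}{\Sigma(A)}$, $\sigma(y)=A_y$: because $\poset L$ is atomic, each $y\in\poset L$ equals $\bigvee A_y$, and so $\tau\circ\sigma=\id_{\poset L}$. Moreover $\sigma$ is meet-preserving, since an atom is below $x\wedge y$ in $\poset L$ iff it is below both $x$ and $y$, so $A_{x\wedge y}=A_x\cap A_y$. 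By construction $M(\poset B)$ is the closure of $\sigma(\poset B)$ under intersections in $\Sigma(A)$, so meet-preservation forces $M(\poset B)\subseteq\sigma(\poset L)$, and $\tau$ restricts to an order isomorphism from $M(\poset B)$ onto a subposet of $\poset L$.

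Next, set $\poset P=\tau(M(\poset B)\setminus\hat 0)$, a subposet of $\IN^m\subseteq\IZ^m$. For $\bfa\in\poset B\subseteq\poset L\setminus\hat 0$ one has $\sigma(\bfa)\ne\emptyset=\hat 0_{M(\poset B)}$ (because $\poset L$ is atomic and $\bfa\ne\hat 0$), hence $\bfa=\tau(\sigma(\bfa))\in\poset P$; on the other hand $\poset P\subseteq\poset L\setminus\hat 0\subseteq\deg(I)$ since any nonzero element of $\poset L$ is a nontrivial lcm of minimal generators of $I$. Thus $\poset B\subseteq\poset P\subseteq\deg(I)$, so the hypotheses of Theorem~\ref{T:main-2}\,(b) are in place.

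Applying that theorem to $\poset P$ gives: for $\bfa\in\poset P$, $\bfa\in\poset B$ iff $\RH_k(\Delta(\poset P_{<\bfa});\gf)\ne 0$ for some $k$. Transporting this equivalence through the order isomorphism $\tau$, and noting that $(M(\poset B)\setminus\hat 0)_{<x}$ coincides with $M(\poset B)_{<x}\setminus\hat 0$ and maps onto $\poset P_{<\tau(x)}$, yields exactly the statement of the proposition. The main obstacle is the initial step: recognizing $M(\poset B)\subseteq\Sigma(A)$ as a faithful combinatorial image (via $\sigma$) of the meet-closure of $\poset B$ inside the atomic lattice $\poset L$, which is what allows us to transfer questions about $M(\poset B)$ into the multigraded setting of Theorem~\ref{T:main-2}.
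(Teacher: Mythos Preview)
Your proposal is correct and follows essentially the same route as the paper: both identify $M(\poset B)\setminus\hat 0$ (via the embedding $\sigma$ and its inverse, your $\tau$) with a subposet of $\poset L\setminus\hat 0$ sandwiched between $\poset B$ and $\deg(I)$, then invoke Theorem~\ref{T:main-2}. The paper is terser, writing simply $M(\poset L)=\poset L$ from meet-preservation of $\sigma$ and concluding $\poset B\subseteq M(\poset B)\setminus\hat 0\subseteq\poset L\setminus\hat 0\subset\deg(I)$; you are just making explicit the identification the paper leaves implicit.
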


\begin{proof}
Let $\poset{L}$ be the lcm-lattice of $I$. 
Since $\poset{L}$ is an atomic lattice, the embedding of 
$\poset{L}$ inside $\Sigma(A)$ preserves meets and therefore  
$M(\poset{L})=\poset{L}$. It follows that 
$
\poset{B}\subseteq M(\poset{B})\setminus\hat 0
         \subseteq\poset{L}\setminus\hat 0\subset\deg(I).
$ 
Now the assertion of the proposition is immediate from 
Theorem~\ref{T:main-2} applied to 
$\poset{P}=M(\poset{B})\setminus\hat 0$. 
\end{proof}

\begin{theorem}\label{T:when_is_betti} 
Let $\poset{P}$ be a finite atomic poset with set of minimal 
elements $A$, let $M(\poset{P})$ be the subposet  
of $\Sigma(A)$ meet-generated by $\poset{P}$ in $\Sigma(A)$, 
and let $\gf$ be a field. 

Then 
$\poset{P}$ is the Betti poset of a monomial ideal  
over $\gf$ 
if and only if an element $x\in M(\poset{P})\setminus\hat 0$ 
is not in $\poset{P}$ precisely when 
$
\RH_n\bigl(\Delta(M(\poset{P})_{<x}\setminus\hat 0);\gf\bigr)=0
$ 
for all~$n$.
\end{theorem}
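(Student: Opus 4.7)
The plan is to prove the two implications separately. The ``only if'' direction is essentially Proposition~\ref{T:if-Betti}: if $\poset{P} = \poset{B}(I,\gf)$ for some monomial ideal $I$, then that proposition supplies precisely the claimed homological characterization of which elements of $M(\poset{P})\setminus\hat 0$ fail to lie in $\poset{P}$.

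For the ``if'' direction, my strategy is to construct a monomial ideal $I$ whose lcm-lattice is isomorphic to $M(\poset{P})$, and then read off its Betti poset using Theorem~\ref{T:main-2}. By Lemma~\ref{T:atomic-lattice}, $M(\poset{P})$ is a finite atomic lattice with set of atoms $A$. The key input is the standard realizability fact (due to Phan) that every finite atomic lattice arises as the lcm-lattice of some monomial ideal. Concretely, one introduces a variable $y_z$ for each non-top $z\in M(\poset{P})$, forms $R=\gf[y_z]$, sets $m_a=\prod_{z\not\geq a}y_z$ for each $a\in A$, and takes $I=(m_a\mid a\in A)$. A short combinatorial verification, using that the lcm of $\{m_{a_1},\dots,m_{a_k}\}$ equals $\prod_{z\not\geq a_1\vee\dots\vee a_k}y_z$, shows that $\poset{L}(I)\cong M(\poset{P})$, with the atoms $A$ corresponding to the multidegrees of the minimal generators of $I$.

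With such an $I$ in hand, I would apply Theorem~\ref{T:main-2}(b) to the subposet $\poset{Q}=\poset{L}(I)\setminus\hat 0\subseteq\IZ^m$, which satisfies $\poset{B}(I,\gf)\subseteq\poset{Q}\subseteq\deg(I)$, to obtain
\[
\poset{B}(I,\gf)=\SET{x\in\poset{Q}}{\RH_k\bigl(\Delta(\poset{Q}_{<x});\gf\bigr)\ne 0\text{ for some }k}.
\]
Transporting along the isomorphism $\poset{L}(I)\cong M(\poset{P})$, and noting that $\poset{Q}_{<x}$ corresponds to $M(\poset{P})_{<x}\setminus\hat 0$, the hypothesis on $\poset{P}$ identifies this set precisely with $\poset{P}$. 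Hence $\poset{B}(I,\gf)\cong\poset{P}$, as desired.

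The main obstacle is having the realizability result on hand: it is not developed earlier in the paper and must either be cited or proved inline via the explicit construction above. Once it is available, the rest of the argument is a formal consequence of Theorem~\ref{T:main-2}(b), Proposition~\ref{T:if-Betti}, and Lemma~\ref{T:atomic-lattice}, with no further homological work required.
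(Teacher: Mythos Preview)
Your proposal is correct and follows essentially the same approach as the paper: the ``only if'' direction is Proposition~\ref{T:if-Betti}, and the ``if'' direction uses Lemma~\ref{T:atomic-lattice} together with Phan's realizability result to produce an ideal whose lcm-lattice is $M(\poset{P})$, then reads off the Betti poset from the homology of the order complexes. The only cosmetic difference is that for this last step the paper cites \cite{Gasharov-Peeva-Welker}*{Theorem~2.1}, whereas you invoke the paper's own Theorem~\ref{T:main-2}(b) (which subsumes it) and spell out Phan's construction explicitly rather than merely citing it.
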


\begin{proof} 
The ``only if'' direction of the theorem is 
Proposition~\ref{T:if-Betti}. Suppose now for each 
$x\in M(\poset{P})\setminus\hat 0$ that 
$x\notin\poset{P}$ exactly when 
$
\RH_n\bigl(\Delta(M(\poset{P})_{<x}\setminus\hat 0);\gf\bigr)=0
$ 
for each $n$. Since $M(\poset{P})$ is an atomic lattice 
by Lemma~\ref{T:atomic-lattice}, 
it is the lcm-lattice of some monomial ideal $J$ \cite{Phan}. 
Thus by 
\cite{Gasharov-Peeva-Welker}*{Theorem~2.1} 
the poset $\poset{P}$ is the Betti poset of $J$ over $\gf$. 
\end{proof}

When $\poset B$ is the Betti poset of a monomial ideal $I$ over $\gf$ 
the lattice $M(\poset B)$ seems to play an important structural role. 
This motivates the following definition. 

\begin{definition} 
Let $\gf$ be a field, and let $I$ be a monomial ideal in $R$. We 
call the lattice $M\bigl(\poset B(I,\gf)\bigr)$ the 
\emph{Betti lattice} of $I$ over $\gf$. 
\end{definition}

As seen in the proof of Proposition~\ref{T:if-Betti},
one has $M(\poset B(I,\gf))\subseteq \poset L(I)$ for each $\gf$,   
yielding a collection of subposets of $\poset L(I)$. 
It is an intriguing open problem to investigate how the 
properties of this collection of Betti lattices 
reflect the properties of the ideal $I$.



\begin{bibdiv}
\begin{biblist}

\bib{Apel}{article}{
  author={Apel, Joachim}, 
  title={On a conjecture of R. P. Stanley. I. Monomial ideals}, 
  journal={J. Algebraic Combin.}, 
  volume={17}, 
  date={2003}, 
  number={1}, 
  pages={39--56}, 
  review={\MR{1958008}},
}

\bib{Apel-2}{article}{
  author={Apel, Joachim}, 
  title={On a conjecture of R. P. Stanley. II. 
         Quotients modulo monomial ideals}, 
  journal={J. Algebraic Combin.}, 
  volume={17}, 
  date={2003}, 
  number={1}, 
  pages={57--74}, 
  review={\MR{1958009}},
}

\bib{Bayer-Peeva-Sturmfels}{article}{
  author={Bayer, Dave},
  author={Peeva, Irena},
  author={Sturmfels, Bernd},
  title={Monomial resolutions},
  journal={Math. Res. Lett.},
  volume={5},
  date={1998},
  number={1-2},
  pages={31--46},
  review={\MR{1618363}},
}

\bib{Bjorner-Wachs-Welker}{article}{
  author={Bj\"orner, Anders},
  author={Wachs, Michelle}, 
  author={Weker, Volkmar}, 
  title={Poset fiber theorems}, 
  journal={Trans. Amer. Math. Soc.}, 
  volume={357}, 
  date={2005}, 
  number={5},
  pages={1877--1899}, 
  review={\MR{2115080}}, 
}

\bib{Carlsson-Zomorodian}{article}{
  author={Carlsson, Gunnar},
  author={Zomorodian, Afra},
  title={The theory of multidimensional persistence},
  journal={Discrete Comput. Geom.},
  volume={42},
  date={2009},
  number={1},
  pages={71--93},
  review={\MR{2506738}},
} 

\bib{Charalambous-Tchernev}{article}{
  author={Charalambous, Hara},
  author={Tchernev, Alexandre},
  title={Betti numbers of multigraded modules of generic type},
  journal={J. Pure Appl. Algebra},
  date={2014},
  status={in press},
}

\bib{Clark1}{article}{
  author={Clark, Timothy B. P.},
  title={Poset resolutions and lattice-linear monomial ideals},
  journal={J. Algebra},
  volume={323},
  date={2010},
  number={4},
  pages={899--919},
  review={\MR{2578585}},
}

\bib{Clark2}{article}{
   author={Clark, Timothy B. P.},
   title={A minimal poset resolution of stable ideals},
   conference={
      title={Progress in commutative algebra 1},
   },
   book={
      publisher={de Gruyter, Berlin},
   },
   date={2012},
   pages={143--166},
   review={\MR{2932584}},
}

\bib{Clark-Mapes1}{article}{
   author={Clark, Timothy B. P.},
   author={Mapes, Sonja},
   title={Rigid monomial ideals},
   journal={J. Commut. Algebra},
   volume={6},
   date={2014},
   number={1},
   pages={33--52},
   review={\MR{3215560}},
}

\bib{Clark-Mapes2}{article}{
  author={Clark, Timothy B. P.},
  author={Mapes, Sonja},
  title={The Betti poset in monomial resolutions},
  date={2014},
  status={preprint},
  eprint={http://arxiv.org/abs/1407.5702}, 
}

\bib{Clark-Tchernev}{article}{
  author={Clark, Timothy B. P.},  
  author={Tchernev, Alexandre}, 
  title={Regular CW-complexes and poset resolutions 
         of monomial ideals}, 
  date={2013},
  status={preprint}, 
  eprint={http://arxiv.org/abs/1310.2315}, 
}

\bib{Davis-Lueck}{article}{
  author={Davis, James F.},
  author={L{\"u}ck, Wolfgang},
  title={Spaces over a category and assembly maps in 
         isomorphism conjectures in $K$- and $L$-theory},
  journal={$K$-Theory},
  volume={15},
  date={1998},
  number={3},
  pages={201--252},
  review={\MR{1659969}},
}

\bib{tomDieck}{book}{
  author={tom Dieck, Tammo},
  title={Transformation groups},
  series={de Gruyter Studies in Mathematics},
  volume={8},
  publisher={Walter de Gruyter \& Co.},
  place={Berlin},
  date={1987},
  review={\MR{889050}},
}

\bib{Gasharov-Peeva-Welker}{article}{
  author={Gasharov, Vesselin},
  author={Peeva, Irena},
  author={Welker, Volkmar},
  title={The lcm-lattice in monomial resolutions},
  journal={Math. Res. Lett.},
  volume={6},
  date={1999},
  number={5-6},
  pages={521--532},
  review={\MR{1739211}},
}

\bib{Lueck}{book}{
  label={L\"uck89},
  author={L\"uck, Wolfgang},
  title={Transformation groups and algebraic $K$-theory},
  series={Lecture Notes in Mathematics},
  volume={1408},
  note={Mathematica Gottingensis},
  publisher={Springer-Verlag},
  place={Berlin},
  date={1989},
  review={\MR{1027600}},
}

\bib{LRV}{article}{
  author={L{\"u}ck, Wolfgang},
  author={Reich, Holger},
  author={Varisco, Marco},
  title={Commuting homotopy limits and smash products},
  journal={$K$-Theory},
  volume={30},
  date={2003},
  number={2},
  pages={137--165},
  review={\MR{2064237}},
}

\bib{Peeva-Velasco}{article}{
  author={Peeva, Irena},
  author={Velasco, Mauricio},
  title={Frames and degenerations of monomial resolutions},
  journal={Trans. Amer. Math. Soc.},
  volume={363},
  date={2011},
  number={4},
  pages={2029--2046},
  review={\MR{2746674}},
}

\bib{Phan}{book}{
  label={Phan06},
  author={Phan, Jeffry},
  title={Order properties of monomial ideals and their free resolutions},
  series={Ph.D. Thesis},
  publisher={Columbia University},
  place={New York},
  date={2006},
  review={\MR{2708612}},
}

\bib{Tchernev-Varisco}{article}{
  label={TV},
  author={Tchernev, Alexandre},
  author={Varisco, Marco},
  title={Betti categories and graded duality},
  status={in preparation}, 
}

\bib{Weibel}{book}{
  author={Weibel, Charles A.},
  title={An introduction to homological algebra},
  series={Cambridge Studies in Advanced Mathematics},
  volume={38},
  publisher={Cambridge University Press},
  place={Cambridge},
  date={1994},
  review={\MR{1269324}},
}

\bib{Wood}{thesis}{
  label={Wood},
  author={Wood, Daniel},
  title={Monomial resolutions supported on CW-complexes and 
         Betti-linear monomial ideals},
  type={Ph.D.~thesis},
  organization={University at Albany, SUNY},
  status={in preparation}, 
}

\end{biblist}
\end{bibdiv}




\end{document}